\documentclass{amsart}
\usepackage[colorlinks]{hyperref}
\usepackage{hyperref}
\usepackage[colorinlistoftodos]{todonotes}
\DeclareMathOperator{\lcm}{lcm}

\usepackage{float}

\setlength{\textheight}{43pc}
\setlength {\textwidth}{28pc}

\usepackage{amsxtra}
\usepackage{amsthm}
\usepackage{amsmath}
\usepackage{bbm}
\usepackage{cancel}
\usepackage{verbatim}
\usepackage{amsrefs}
\usepackage{geometry}
\usepackage{amssymb, amsfonts}
\usepackage{cite}
\usepackage{graphicx}
 
\theoremstyle{plain}
\newtheorem{Thm}{Theorem}

\newtheorem{Lem}[Thm]{Lemma}

\newtheorem{Def}[Thm]{Definition}

\theoremstyle{definition}

\newtheorem{Ex}[Thm]{Example}

%text inside math
%real part
%imaginary part
\renewcommand{\bar}{\overline}%bar (wide version often looks better)
%tilde (wide version often looks better)

\newcommand{\C}{\mathbb C}%complex #s
%real #s
%Riemann sphere
%complex projective space
%real projective space
%integers
%natural #s
%dee
%deebar

%such that (can switch in vertical bar instead)

\newcommand{\Mod}[1]{\ (\mathrm{mod}\ #1)}

%support
%residue
%span
%length
%distance
%automorphisms
%diameter
%lambda
%Lambda
%epsilon
%specify source/target/input/output
%boundary
%wedge
%inverse
%restriction
\numberwithin{equation}{section}
%end command section

\begin{document}
\title{Invariant CR Mappings between Hyperquadrics}

\author[Dusty Grundmeier]{Dusty Grundmeier}
\address{Department of Mathematics \\
Harvard University}
\email{deg@math.harvard.edu}

\author[Kemen Linsuain]{Kemen Linsuain}
\address{Department of Mathematics \\
Harvard University}
\email{klinsuain@alumni.harvard.edu}

\author[Brendan Whitaker]{Brendan Whitaker}
\address{Department of Mathematics \\
Ohio State University}
\email{whitaker.213@osu.edu}

\thanks{{\em 2010 Mathematics Subject Classification:} 	32V20 (15A15 05A15 32H35)}

\thanks{The third author would like to acknowledge support from the Second Year Transformational Experience Program, the Honors and Scholars Enrichment Grant, and the Undergraduate Student Government's Academic Enrichment Grant at the Ohio State University. }

\date{\today}

\begin{abstract}  
We analyze a canonical construction of group-invariant CR Mappings between hyperquadrics due to D'Angelo. Given source hyperquadric of $Q(1,1)$, we determine the signature of the target hyperquadric for all finite subgroups of $SU(1,1)$. We also extend combinatorial results proven by Loehr, Warrington, and Wilf on determinants of sparse circulant determinants. We apply these results to study CR mappings invariant under finite subgroups of $U(1,1)$.

\end{abstract}

\maketitle

\section{Introduction}
The goal of this paper is to examine properties of group-invariant Cauchy-Riemann (CR) mappings, especially as relating to unitary groups with indefinite metric. A fundamental problem in CR geometry and higher dimensional complex analysis is to determine whether given two real hypersurfaces, $M$ and $N$ in $\C^n$ and $\C^N$ respectively, there exists a non-constant (smooth) CR mapping $f: M \to N$ (see for instance \cite{D3,BER,F1}). In general there are no such mappings; however for spheres and hyperquardrics (defined below) many such inequivalent mappings can exist. In recent years progress on this problem has led to the emerging field of {\it CR complexity theory} (see for instance the recent work of D'Angelo and Xiao in \cite{DX1} and the references therein). 

A natural starting point for this problem is to study mappings between spheres and hyperquadrics. Let $S^{2m-1}$ be the unit sphere in $\mathbb{C}^m$, and define the hyperquadric with signature pair $(a,b)$ as
$$Q(a,b)= \{z \in \mathbb{C}^{a+b}: \sum_{i=1}^{a}|z_i|^2-\sum_{j=a+1}^{a+b}|z_j|^2=1\}.
$$ Note $Q(a,0)=S^{2a-1}$. Since there are many inequivalent mappings $f: Q(a,b) \to Q(A,B)$ (for appropriate choices of $a,b,A,B$), it is reasonable to ask for the $f$ to satisfy additional constraints.  In the present paper we focus on the problem of constructing group-invariant mappings between hyperquadrics.

CR mappings invariant under finite group actions yield many interesting connections with other areas of mathematics, including number theory, combinatorics, algebraic geometry, and representation theory (see for instance \cite{LWW, D2,D3, DX1,DX2, G1,G2} and the references therein).
In this case, let $\Gamma$ be a finite subgroup of an indefinite unitary group $SU(a,b)$ or $U(a,b)$ (defined precisely in Section \ref{s:su}). We are interested in CR mappings satisfying 
$$\label{1}
f: Q(a,b) \rightarrow Q(A,B) \qquad \text{ and } \qquad f \circ \gamma = f \text{ for all } \gamma \in \Gamma.
$$ 

The problem of constructing group-invariant CR mappings has attracted substantial interest over the years (see \cite{F1,F2,F3,D1,D2,D3, G1, G2, GLV,DX1, DX2} and the references therein). D'Angelo and Lichtblau \cite{DL} gave a canonical construction of invariant-polynomial CR mappings, and they used this construction to study the CR spherical space form problem. The first author \cite{G1,G2} found the target hyperquadric for this canonical construction for all subgroups of $SU(2)$. In the present paper we build on this work in the case of the source hyperquadric $Q(1,1)$. 

The fundamental approach in this paper is to use the canonical construction of invariant CR mappings given  in \cite{DL}. Let $\Gamma$ be a finite subgroup of the indefinite unitary group $U(a,b)$, and define $$\langle z, w \rangle_b=\sum_{j=1}^a{z_j \overline{w_j}}-\sum_{j=a+1}^{a+b}{z_j \overline{w_j}}.$$ Following the construction in \cite{DL} of an invariant, Hermitian-symmetric polynomial, we introduce
$$
\Phi_{\Gamma}(z,\bar{z})= 1- \prod_{\gamma \in \Gamma}(1-\langle \gamma z,z \rangle_b).
$$

Expanding the product and diagonalizing the underlying matrix of coefficients gives
$$
\Phi_{\Gamma}(z,\bar{z})= ||F(z)||^2-||G(z)||^2
$$
where $F$ and $G$ are $\Gamma$-invariant, linearly-independent, holomorphic polynomials. Let $N^{+}(\Gamma)$ and $N^{-}(\Gamma)$ denote the numbers of components of $F$ and $G$ respectively. Therefore $\Phi_{\Gamma}$ generates an associated $\Gamma$-invariant CR-mapping $\phi_{\Gamma}$ given by $$\phi_{\Gamma}=F\oplus G : Q(a,b) \to Q(N^{+}(\Gamma),N^{-}(\Gamma)).$$ We define $S(\Gamma)=(N^{+}(\Gamma),N^{-}(\Gamma))$ to be the signature pair of $\Phi_{\Gamma}$ or the target hyperquadric of the associated CR mapping $\phi_\Gamma$. 
In the general case of an arbitrary finite group $\Gamma$, the resulting matrix of coefficients of $\Phi_{\Gamma}$ is often very large and difficult to diagonalize explicitly; however, some groups give a sparse or diagonal matrix, allowing interesting results to be proven. For instance, in \cite{G1} the first author shows that the target hyperquadric associated to the binary icosahedral group of order 120 has 40 positive eigenvalues and 22 negative eigenvalues.

Even in the case of cyclic subgroups, the combinatorics is interesting and difficult. Let $\Gamma_{p;q_1,q_2}$ be the cyclic subgroup of order $p$ in $U(1,1)$ generated by $$s=\begin{pmatrix} \omega^{q_1} & 0 \\ 0 & \omega^{q_2} \end{pmatrix}$$ where $\omega$ is a $p$-th primitive root of unity and $p$, $q_1$, and $q_2$ have no common factor. We assume $q_1$ and $q_2$ are minimally chosen and $0 \leq q_1 \leq q_2 < p$. The first main result gives the exact signature pair for all finite subgroups of $SU(1,1)$.

\begin{Thm}\label{thm:SU(1,1)} If $\Gamma$ is a subgroup of order $p$ in $SU(1,1)$, then
the signature pair is given by
$$
S(\Gamma)= \left(2, \frac{p}{2}\right) \text{ if }p\text{  is even}, \quad  \left(1, \frac{p+1}{2}\right) \text{ if }p\text{  is odd.}
$$

\end{Thm}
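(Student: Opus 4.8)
The plan is to first reduce to a single completely explicit cyclic group, then compute the defining polynomial in closed form and simply read off the signs of its coefficients. Since $SU(1,1)$ is isomorphic to $SL(2,\R)$, its maximal compact subgroup is a circle, so every finite subgroup is conjugate (inside $SU(1,1)$) to a subgroup of the diagonal torus; in particular $\Gamma$ is cyclic, generated by an element conjugate to $s=\mathrm{diag}(\omega,\omega^{-1})$ with $\omega=e^{2\pi i/p}$ a primitive $p$-th root of unity. Because the signature pair $S(\Gamma)$ is unchanged under unitary conjugation, and because replacing the generator by another primitive power leaves the \emph{set} $\Gamma$ unchanged, I may assume $\Gamma=\Gamma_{p;1,p-1}=\{\mathrm{diag}(\omega^k,\omega^{-k}):0\le k\le p-1\}$.

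With this normalization $\langle\gamma^k z,z\rangle_b=\omega^k|z_1|^2-\omega^{-k}|z_2|^2$, so writing $x=|z_1|^2$ and $y=|z_2|^2$ the defining polynomial becomes
$$\Phi_\Gamma(z,\bar z)=1-\prod_{k=0}^{p-1}\bigl(1-\omega^k x+\omega^{-k}y\bigr).$$
The decisive structural observation is that only the real monomials $|z_1|^2$ and $|z_2|^2$ enter, so after expansion every term has the form $\kappa_{ab}\,x^a y^b=\kappa_{ab}\,|z_1^a z_2^b|^2$. Since the holomorphic monomials $z_1^a z_2^b$ are linearly independent, the underlying Hermitian matrix of coefficients is \emph{already diagonal}: $N^+(\Gamma)$ is exactly the number of coefficients $\kappa_{ab}>0$ and $N^-(\Gamma)$ the number with $\kappa_{ab}<0$. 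Thus the whole problem reduces to computing the coefficients of $\Phi_\Gamma$ as a polynomial in $x$ and $y$ and counting their signs.

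To obtain those coefficients I would evaluate the product in closed form. Multiplying the $k$-th factor by $\omega^k$ turns it into the quadratic $-x\,\omega^{2k}+\omega^k+y$, whose two roots $r_1,r_2$ satisfy $r_1+r_2=1/x$ and $r_1r_2=-y/x$; using $\prod_{\zeta^p=1}(r-\zeta)=r^p-1$ and $\prod_{\zeta^p=1}\zeta=(-1)^{p+1}$, the product collapses to
$$\prod_{k=0}^{p-1}\bigl(1-\omega^k x+\omega^{-k}y\bigr)=-x^p\,(r_1^p-1)(r_2^p-1).$$
Setting $u=xr_1$, $v=xr_2$, so that $u+v=1$ and $uv=-xy$, the symmetric quantity $x^p(r_1^p+r_2^p)=u^p+v^p$ is the power sum $L_p(xy)$, which by the standard two-variable power-sum formula equals $\sum_{j=0}^{\lfloor p/2\rfloor}\frac{p}{p-j}\binom{p-j}{j}(xy)^j$ --- a polynomial with constant term $1$ and all remaining coefficients strictly positive. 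Collecting terms yields the clean formula
$$\Phi_\Gamma=x^p+(-1)^p y^p-\sum_{j=1}^{\lfloor p/2\rfloor}\frac{p}{p-j}\binom{p-j}{j}\,x^j y^j.$$

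Finally I would read off the signature by parity. Each mixed monomial $x^j y^j$ with $1\le j\le\lfloor p/2\rfloor$ carries a strictly negative coefficient and these are all distinct, contributing $\lfloor p/2\rfloor$ negative eigenvalues. When $p$ is odd, $(-1)^p y^p=-y^p$ is negative while $x^p$ is positive and $\lfloor p/2\rfloor=(p-1)/2$, giving $S(\Gamma)=(1,(p+1)/2)$; when $p$ is even, both $x^p$ and $y^p$ are positive while the negative block has size $\lfloor p/2\rfloor=p/2$, giving $S(\Gamma)=(2,p/2)$. I expect the main obstacle to be the closed-form evaluation of the product in the third step --- in particular verifying that no unexpected cancellation occurs and that the power-sum coefficients are indeed all positive and nonzero; once that formula is in hand the sign count is immediate.
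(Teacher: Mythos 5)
Your proof is correct, and its overall skeleton---reduce via conjugation to the diagonal cyclic group generated by $\mathrm{diag}(\omega,\omega^{-1})$, observe that the coefficient matrix is already diagonal in the monomial basis since only $|z_1|^2$ and $|z_2|^2$ appear, and then count signs of coefficients---matches the paper's. The difference is in how the closed form for $\Phi_\Gamma$ is obtained. The paper quotes D'Angelo's explicit $SU(2)$ formula from \cite{D1}, namely $1-\prod_{j=1}^p(1-\omega^j|z_1|^2-\omega^{-j}|z_2|^2)=|z_1|^{2p}+|z_2|^{2p}+\sum_{j=1}^{\lfloor p/2\rfloor}(-1)^{j-1}k_j|z_1|^{2j}|z_2|^{2j}$ with $k_j>0$, and converts it to the $SU(1,1)$ setting by flipping the sign of $|z_2|^2$; you instead derive the formula from scratch by factoring each factor of the product as a quadratic in $\omega^k$, collapsing the product over $p$-th roots of unity via $\prod_{\zeta^p=1}(r-\zeta)=r^p-1$, and evaluating $u^p+v^p$ with $u+v=1$, $uv=-xy$ by the Girard--Waring power-sum identity. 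Your route is self-contained (it in effect reproves D'Angelo's formula) and makes the coefficients explicit, $k_j=\tfrac{p}{p-j}\binom{p-j}{j}$, while the paper's route is shorter because it leverages the known $SU(2)$ result. I checked your algebra: the product does equal $-x^p(r_1^p-1)(r_2^p-1)$, the substitution $e_1=1$, $e_2=-xy$ turns the alternating signs of the power-sum expansion into all-positive coefficients (including the boundary case $j=p/2$, coefficient $2$, when $p$ is even), and the resulting sign count gives exactly $(2,p/2)$ for $p$ even and $(1,(p+1)/2)$ for $p$ odd.
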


For a general cyclic subgroup $\Gamma_{p,q_1,q_2} \subset U(1,1)$, a more delicate combinatorial analysis is required. Loehr, Warrington, Wilf \cite{LWW} used circulant determinants in order to study the case 
$\Gamma_{p;1,q_2} \subset U(2)$. In \cite{LWW}, they posed the problem of generalizing their results to the general case $\Gamma_{p;q_1,q_2}$. We adapt and extend their arguments to complete the story of $\Gamma_{p;q_1,q_2}$ in Theorem \ref{thm:coefsign}. The underlying strategy follows the ideas of Loehr, Warrington, and Wilf in \cite{LWW}.

\begin{Thm}\label{thm:coefsign}
Suppose $\gcd(p,q_1,q_2) = 1$. In the polynomial
\[\Phi_{p,q_1,q_2}(z,\overline{z}) = 1 - \prod_{j = 1}^{p}(1 - |z_1|^2\omega^{q_1 j} - |z_2|^2\omega^{q_1 j}),\]
the monomials $C_{rs} |z_1|^{2r}|z_2|^{2s}$ which appear are exactly those for which $p|(rq_1 + sq_2)$, and the coefficients $C_{rs}$ of these monomials are positive if and only if $\gcd \left(q_1,q_2,\frac{rq_1 + sq_2}{p}\right)$
 is odd. 
\end{Thm}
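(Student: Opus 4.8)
The plan is to recognize the product as a sparse circulant determinant and then read off its coefficients combinatorially, following and extending the strategy of Loehr--Warrington--Wilf. Writing $x = |z_1|^2$ and $y = |z_2|^2$ and reindexing $j$ over $\Z/p\Z$, the factor $1 - x\omega^{q_1 j} - y\omega^{q_2 j}$ is exactly the $j$-th eigenvalue of the circulant matrix $C$ whose first row carries $1$ in position $0$, $-x$ in position $q_1$, and $-y$ in position $q_2$ (zeros elsewhere). Since $\omega$ is a primitive $p$-th root of unity, $\prod_{j=1}^{p}(1 - x\omega^{q_1 j} - y\omega^{q_2 j}) = \det C$, so $\Phi_{p,q_1,q_2} = 1 - \det C$ and it suffices to understand the polynomial coefficients of this three-term sparse circulant determinant.

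First I would expand $\det C$ by the cycle form of the permutation definition. A permutation contributes a nonzero monomial only when every point $i$ satisfies $\pi(i) - i \in \{0,q_1,q_2\} \pmod p$, so each nontrivial cycle is a closed walk on $\Z/p\Z$ whose steps are $q_1$'s and $q_2$'s visiting distinct vertices, while fixed points contribute the entry $1$. Comparing the sign of a cycle with the signs of the chosen entries, one finds that the coefficient of $x^r y^s$ in $\det C$ equals $\sum_{\mathcal{C}} (-1)^{|\mathcal{C}|}$, where $\mathcal{C}$ ranges over families of vertex-disjoint cycles using exactly $r$ steps of size $q_1$ and $s$ of size $q_2$, and $|\mathcal{C}|$ counts the cycles. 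Equivalently, the disjoint-subset expansion of the product gives the character sum $[x^ry^s]\det C = (-1)^{r+s}\sum_{A,B}\omega^{q_1\sigma(A)+q_2\sigma(B)}$ over disjoint $A,B \subseteq \Z/p\Z$ with $|A|=r$, $|B|=s$ (here $\sigma$ denotes the sum of the indices); I would keep both formulations, since the cycle picture is cleaner for the sign and the character sum is cleaner for vanishing.

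The easy half — which monomials occur — follows from the cycle description: each cycle returns to its starting vertex, so its net displacement is $\equiv 0 \pmod p$, and summing over the cycles of any contributing family forces $rq_1 + sq_2 \equiv 0 \pmod p$. Hence no monomial with $p \nmid (rq_1 + sq_2)$ can appear, and the converse — that every such monomial genuinely appears — will follow once the relevant signed count is shown to be nonzero.

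The hard part, and the main obstacle, is to evaluate $\sum_{\mathcal{C}}(-1)^{|\mathcal{C}|}$ and to pin down its sign as the parity of $d = \gcd\!\left(q_1,q_2,N\right)$ with $N = (rq_1+sq_2)/p$. Here I would adapt the Loehr--Warrington--Wilf device of a sign-reversing involution on cycle families: most families pair off and cancel, leaving a residual set of ``primitive'' configurations organized by $d$, whose surviving contribution is nonzero and positive exactly when $d$ is odd. The genuinely new difficulty, beyond the case $q_1 = 1$ of \cite{LWW}, is that the step set $\{q_1,q_2\}$ now generates a more intricate orbit structure on $\Z/p\Z$ — the closed walks need no longer be assembled from unit steps — so both the construction of the involution and the identification of its fixed points require care. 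I also expect the parity of $p$, together with the pure powers $x^r$ and $y^s$ (where no cancellation occurs and the coefficients are binomial), to demand separate bookkeeping, mirroring the even/odd split already present in Theorem \ref{thm:SU(1,1)}.
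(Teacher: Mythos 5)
Your setup (the circulant determinant, the permutation expansion into disjoint cycles with steps in $\{q_1,q_2\}$, and the observation that closed cycles force $p\mid(rq_1+sq_2)$) matches the paper's proof exactly. But at the hard step your plan invokes the wrong mechanism, and this is a genuine gap. You propose a sign-reversing involution under which ``most families pair off and cancel,'' leaving primitive configurations. That is not how this determinant behaves, and it is not the Loehr--Warrington--Wilf device either: the crucial fact, both in \cite{LWW} and in the paper's extension to arbitrary $q_1$, is that there is \emph{no cancellation at all}. The paper proves (Lemmas~\ref{lem:riequal} and~\ref{lem:idcycle}, via the $q_1$-ordered traversal of $[p]$ and the partition into the sets $V_j$ and $W_j$) that when $\gcd(p,q_1,q_2)=1$, every permutation contributing to $x^ry^s$ has the \emph{same} cycle structure: exactly $k=\gcd(r,s,l)$ nontrivial cycles, where $l=(rq_1+sq_2)/p$, each cycle having $r/k$ $q_1$-steps and $s/k$ $q_2$-steps. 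Hence all contributing permutations carry the same sign $(-1)^{r+s+\gcd(r,s,l)}$, so the coefficient equals $(-1)^{\gcd(r,s,l)+1}\,|T_{p;q_1,q_2}(r,s)|$ (Lemma~\ref{lem:coeffs}) and its sign is read off immediately. Your plan contains no idea for proving this uniform cycle structure, which is where essentially all the work lies; an involution-with-residual-fixed-points framework has nothing to act on here, since the signed sum it would be designed to simplify already has all terms equal. Separately, the converse direction --- that every $(r,s)$ with $p\mid(rq_1+sq_2)$ genuinely occurs --- cannot simply ``follow once the relevant signed count is shown to be nonzero'': the paper must exhibit an element of $T_{p;q_1,q_2}(r,s)$ explicitly, via a lattice path hugging the line $sx=ry$ made into a valid cycle using the number-theoretic estimate of Lemma~\ref{lem:sarb} (that $p\mid(aq_1+bq_2)$ and $p\mid(rq_1+sq_2)$ force $sa-rb=0$ or $|sa-rb|\ge p$), culminating in Lemma~\ref{lem:wellperm}. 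Your proposal defers this entirely.

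A second, smaller problem: you adopted the sign invariant $d=\gcd\bigl(q_1,q_2,(rq_1+sq_2)/p\bigr)$ verbatim from the statement, but that expression is a typo in the statement (as is the repeated exponent $\omega^{q_1 j}$); the quantity governing the sign is $\gcd(r,s,l)$, as the body of the paper asserts in Lemma~\ref{lem:coeffs} and again in Section~\ref{s:general}. The paper's own example $\Phi_{6;2,3}(x,y)=2x^3-x^6+3y^2+6x^3y^2-3y^4+y^6$ exposes the difference: with $q_1=2$, $q_2=3$ one has $\gcd(q_1,q_2,l)=1$ for every term, predicting all coefficients positive, which contradicts $-x^6$ and $-3y^4$; whereas $\gcd(r,s,l)$ gives $\gcd(6,0,2)=2$ and $\gcd(0,4,2)=2$, correctly predicting the negative signs. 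Checking your claimed sign rule on this small case would have flagged both the typo and the fact that your cancellation picture does not match the actual structure. Finally, the parity of $p$ and the pure powers $x^r$, $y^s$ require no separate bookkeeping in this theorem --- that even/odd split belongs to Theorem~\ref{thm:SU(1,1)}, not here.
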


This result can then be applied to estimate the signature pairs for the subgroups of $\Gamma_{p;q_1,q_2} \subset U(1,1)$. In this case we study the positivity ratio $\frac{N^+}{N}$ where $N=N^{+}+N^{-}$. Theorem \ref{thm:asymp} gives the asymptotic behavior of the positivity ratio for the source hyperquadric $Q(1,1)$, and Theorem \ref{thm:sphere} gives the behavior in the case of the source sphere $Q(2,0)=S^{3}$.

\begin{Thm}\label{thm:asymp} 
For $\Gamma_{p;q_1,q_2}\subset U(1,1)$ the asymptotic positivity ratio is given by
$$\lim_{p \to \infty} \frac{N^+(\Gamma_{p;q_1,q_2})}{N(\Gamma_{p;q_1,q_2})}=\begin{cases}
\frac{q_1 q_2+1}{4q_1 q_2} & \text{ for $q_1$ odd and $q_2$ odd.} \\
\frac{q_1(q_2-q_1)+1}{4q_1(q_2-q_1)} & \text{ for $q_1$ odd and $q_2$ even,}

\end{cases}$$

Furthermore, 
$$\lim_{p \to \infty} \frac{N^+(\Gamma_{2p-1;q_1,q_2})}{N(\Gamma_{2p-1;q_1,q_2})}=
\frac{q_2 (q_2-q_1)+1}{4q_2 (q_2-q_1)} \text{ for $q_1$ even and $q_2$ odd.}$$ and
$$\lim_{p \to \infty} \frac{N^+(\Gamma_{2p;q_1,q_2})}{N(\Gamma_{2p;q_1,q_2})}=
\frac{3q_2(q_2-q_1)-1}{4q_2(q_2-q_1)} \text{ for $q_1$ even and $q_2$ odd,}$$
\end{Thm}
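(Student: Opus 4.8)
The plan is to reduce both $N^{+}$ and $N=N^{+}+N^{-}$ to a lattice-point count and to read off the limits from the leading (linear in $p$) behaviour of that count. Since $\Phi_{\Gamma}$ is already diagonal in the monomials $|z_1^{\,r}z_2^{\,s}|^{2}=|z_1|^{2r}|z_2|^{2s}$, we have $N^{+}=\#\{(r,s):C_{rs}>0\}$ and $N=\#\{(r,s):C_{rs}\neq 0\}$, where the $C_{rs}$ are the coefficients of the $U(1,1)$ polynomial. First I would note that the $U(1,1)$ polynomial is obtained from the polynomial of Theorem \ref{thm:coefsign} by the single substitution $|z_2|^{2}\mapsto -|z_2|^{2}$ forced by the indefinite metric $\langle\cdot,\cdot\rangle_b$, so the two families of coefficients differ by the factor $(-1)^{s}$. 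Combining this with the sign determination of Theorem \ref{thm:coefsign} — the sign is governed by the parity of $\gcd(r,s,k)$ with $k:=(rq_1+sq_2)/p$ — and using that this gcd divides $s$ and is therefore automatically odd when $s$ is odd, one gets the clean criterion: a monomial occurs exactly when $p\mid rq_1+sq_2$, and $C_{rs}>0$ exactly when $s$ is even and $(r,k)$ are not both even. After reducing to the coprime case $\gcd(q_1,q_2)=1$ (any common factor is odd by the minimality of the generators, and rescaling the lattice by it leaves all parities unchanged), the positivity condition is purely parity-theoretic.

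Next I would stratify by the level $k$. The occurring exponents lie on the segments $L_k=\{(r,s):r,s\ge 0,\ r+s\le p,\ rq_1+sq_2=kp\}$, $k=1,\dots,q_2$, and on each $L_k$ the lattice points form a single arithmetic progression with $s$ increasing in steps of $q_1$ and $r$ decreasing in steps of $q_2$. An elementary count gives $\#(L_k\cap\Z^{2})=f(k)\,p+O(1)$, where $f$ is the piecewise-linear ``tent'' $f(k)=k/(q_1q_2)$ for $k\le q_1$ and $f(k)=(q_2-k)/\bigl(q_2(q_2-q_1)\bigr)$ for $q_1\le k\le q_2$; summing yields $N=\tfrac12 p+O(1)$ in all cases. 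The crux is then to determine, on each segment, the proportion of points that are positive. Because consecutive points change $s$ by $q_1$ and $r$ by $q_2$, the joint parity of $(r,s)$ along $L_k$ is dictated by the parities of $q_1$ and $q_2$ together with the congruence $rq_1+sq_2\equiv kp\ (\mathrm{mod}\ 2)$, and this is exactly what separates the three regimes.

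I would then carry out the per-segment bookkeeping. When $q_1$ is odd, $s$ alternates parity along $L_k$; a short check shows that even-$k$ segments contribute no positive monomials (there $s$ even forces $r$ even), while odd-$k$ segments contribute exactly half of their points, so $N^{+}=\tfrac12\sum_{k\ \mathrm{odd}}f(k)\,p+O(1)$. Evaluating this finite sum gives the limit $\tfrac{q_1q_2+1}{4q_1q_2}$ when $q_2$ is odd and $\tfrac{q_1(q_2-q_1)+1}{4q_1(q_2-q_1)}$ when $q_2$ is even. When $q_1$ is even and $q_2$ is odd the parity of $s$ is instead \emph{constant} on each $L_k$, equal to the parity of $kp$; hence whole segments are eligible or not, and which ones are eligible flips with the parity of the order $p$. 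For odd order only the even-$k$ segments survive (each contributing half), giving $\tfrac{q_2(q_2-q_1)+1}{4q_2(q_2-q_1)}$; for even order every segment is eligible, odd-$k$ segments contributing fully and even-$k$ segments contributing half, giving the complementary value $\tfrac{3q_2(q_2-q_1)-1}{4q_2(q_2-q_1)}$. Dividing by $N\sim p/2$ and letting $p\to\infty$ produces the four stated limits.

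The step I expect to be the main obstacle is this parity bookkeeping on the segments, together with the control of the $O(1)$ endpoint errors: one must verify that the rounding at the two ends of each $L_k$, and the finitely many very short segments near $k=q_2$, are genuinely lower order, so that the limit is pinned down by the densities $f(k)$ and the constant per-segment fractions alone. The most delicate regime is $q_1$ even, $q_2$ odd, where eligibility of an entire segment is an arithmetic condition tying $k$ to $p$; there I would analyse the subsequences $p=2p'-1$ and $p=2p'$ separately, and the sums must be done precisely enough to yield the exact constants — in particular the ``$+1$'' and ``$-1$'' corrections, which arise because the odd-$k$ (respectively even-$k$) partial sum of $f$ differs from $\tfrac12\sum_k f(k)$ by a term of size $\tfrac{1}{4q_1q_2}$, with analogous corrections in the other regimes.
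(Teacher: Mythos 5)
Your proposal is correct and follows essentially the same route as the paper: the $(-1)^s$ twist relating the $U(1,1)$ coefficients to those of Theorem \ref{thm:coefsign}, the per-weight lattice-point count with the piecewise-linear density (the paper's Lemma \ref{lem:weightineq}), the estimate $N\sim p/2$ (Lemma \ref{lem:evenodd}), and the same case-by-case parity bookkeeping, including the split by parity of the group order when $q_1$ is even and $q_2$ is odd. Your positivity criterion ($s$ even and $r,k$ not both even) is a restatement of the paper's condition that $s+\gcd(r,s,l)$ be odd, and your per-segment fractions and the resulting constants match the paper's computations exactly.
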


\begin{Thm}\label{thm:sphere} 
For $\Gamma_{p;q_1,q_2}\subset U(2)$ the asymptotic positivity ratio is given by
$$\lim_{p \to \infty} \frac{N^+(\Gamma_{p;q_1,q_2})}{N(\Gamma_{p;q_1,q_2})}=\begin{cases}\frac{3q_1(q_2-q_1)+1}{4q_1(q_2-q_1)} & \text{ for $q_1$ odd and $q_2$ even,}\\
\frac{3q_1 q_2+1}{4q_1 q_2} & \text{ for $q_1$ odd and $q_2$ odd,}\\
\frac{3q_2(q_2-q_1)-1}{4q_2(q_2-q_1)} & \text{ for $q_1$ even and $q_2$ odd,} \end{cases}$$
and hence
$$\lim_{q_1\to \infty} \lim_{q_2\to \infty} \lim_{p\to \infty}{\frac{N^{+}(\Gamma_{p;q_1,q_2})}{N(\Gamma_{p;q_1,q_2})}}=\frac{3}{4}.$$
\end{Thm}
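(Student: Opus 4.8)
The plan is to reduce both $N^{+}$ and $N^{-}$ to a signed lattice-point count and then pass to the $p\to\infty$ asymptotics. Because $\Phi_{p;q_1,q_2}$ is a function of $|z_1|^2$ and $|z_2|^2$ alone, it is already diagonal in the holomorphic monomial basis $\{z_1^{r}z_2^{s}\}$; hence $N^{+}$ (respectively $N^{-}$) is exactly the number of pairs $(r,s)$ with $C_{rs}>0$ (respectively $C_{rs}<0$), and $N=N^{+}+N^{-}$ is the total number of monomials. By Theorem~\ref{thm:coefsign} these pairs are precisely the lattice points of the triangle
\[
T_p=\{(r,s)\in\Z^2 : r,s\ge 0,\ 1\le r+s\le p\}
\]
lying on the sublattice $\{rq_1+sq_2\equiv 0\ (\mathrm{mod}\ p)\}$, and the sign of $C_{rs}$ is determined by the arithmetic of $q_1,q_2$ and $k:=\tfrac{rq_1+sq_2}{p}$. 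Thus the whole theorem is a statement about counting, with sign, the sublattice points of $T_p$, and the remaining work is asymptotic.

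Next I would rescale. Setting $(\xi,\eta)=(r/p,s/p)$ carries $T_p$ to the fixed triangle $\Delta=\{\xi,\eta\ge 0,\ \xi+\eta\le 1\}$ and turns $k$ into the linear functional $q_1\xi+q_2\eta\in[0,q_2]$. As $p\to\infty$ the sublattice points equidistribute in $\Delta$, so each of $N,N^{+},N^{-}$ grows linearly in $p$ and the positivity ratio converges to a ratio of densities; the task is then to compute $\lim_{p\to\infty}N^{+}/p$ and $\lim_{p\to\infty}N/p$. Concretely, for each $k$ with $\gcd(q_1,q_2)\mid k$ the relevant points lie on the segment $T_p\cap\{rq_1+sq_2=pk\}$, along which consecutive sublattice points are spaced by the primitive vector $(q_2,-q_1)/\gcd(q_1,q_2)$; a Pick-type count then gives $\sim\frac{\gcd(q_1,q_2)}{q_1q_2}\,k\,p$ points for $k\le q_1$ and $\sim\frac{\gcd(q_1,q_2)}{q_2(q_2-q_1)}\,(q_2-k)\,p$ points for $q_1\le k\le q_2$. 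Summing these densities over all admissible $k$ gives $\lim N/p$, and summing only over the contributions that Theorem~\ref{thm:coefsign} marks positive gives $\lim N^{+}/p$.

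Finally I would carry out the bookkeeping case by case. The set of $k$—and, within a fixed $k$-segment, the sub-proportion of points—marked positive by Theorem~\ref{thm:coefsign} depends on the parities of $q_1$ and $q_2$, and this is what splits the answer into the three stated regimes; the residual boundary points (those on $r+s=p$, those near the vertices, and the degenerate segments $k=q_1$ and $k=q_2$) contribute lower-order terms that survive in the ratio only as the $+1$ or $-1$ in the numerators. The precise evaluation of the positive densities is exactly where the circulant-determinant technology of Loehr--Warrington--Wilf \cite{LWW}, as extended in Theorem~\ref{thm:coefsign}, is needed, and I expect this casework to be the main obstacle: one must simultaneously track the arithmetic sign condition, the interaction with the parity of $p$ (the reason the companion Theorem~\ref{thm:asymp} must separate the subsequences $p=2p'-1$ and $p=2p'$), and the exact boundary contributions. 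It is also worth recording the structural shortcut that the sphere polynomial and the $U(1,1)$ polynomial differ only by $y\mapsto -y$, so that $C^{\,U(2)}_{rs}$ and $C^{\,U(1,1)}_{rs}$ differ by the factor $(-1)^{s}$; refining the count above by the parity of $s$ therefore links Theorems~\ref{thm:sphere} and \ref{thm:asymp} and lets one transfer the computation from the hyperquadric case to the sphere case.

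Once the three closed forms are in hand, the concluding assertion is immediate. Each equals $\tfrac34$ plus a correction of size $O\!\big(\tfrac{1}{q_1q_2}\big)$, $O\!\big(\tfrac{1}{q_1(q_2-q_1)}\big)$, or $O\!\big(\tfrac{1}{q_2(q_2-q_1)}\big)$; since, for fixed $q_1$, both parity cases that arise as $q_2\to\infty$ individually tend to $\tfrac34$, the inner limit $\lim_{q_2\to\infty}$ exists and equals $\tfrac34$, and a further $\lim_{q_1\to\infty}$ leaves it unchanged, giving the value $\tfrac34$.
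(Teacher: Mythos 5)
Your skeleton is the same as the paper's: the nonzero coefficients are the lattice points of $\{(r,s): r,s\ge0,\ r+s\le p,\ p\mid(rq_1+sq_2)\}$, the number of weight-$l$ points is $\frac{lp}{q_1q_2}+O(1)$ for $l\le q_1$ and $\frac{(q_2-l)p}{q_2(q_2-q_1)}+O(1)$ for $q_1<l\le q_2$ (your Pick-type segment count is precisely Lemma \ref{lem:weightineq}; note $\gcd(q_1,q_2)=1$ is a standing assumption, so the gcd factors you carry are all $1$), signs come from Theorem \ref{thm:coefsign}, and one sums over weights split by parity. So you have identified the correct reduction and the correct densities, and your final deduction of the iterated $3/4$ limit from the three closed forms is fine.

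However, there is a genuine gap, located exactly where the paper's proof does its work, and it is compounded by a claim that is false. You defer the parity casework as ``the main obstacle,'' but that casework \emph{is} the proof: for each parity class of $(q_1,q_2)$ one shows, via the sign rule of Theorem \ref{thm:coefsign} (positivity iff $\gcd(r,s,l)$ is odd), that every odd-weight term is positive and exactly half of each even-weight segment is positive, and then evaluates the parity-restricted sums exactly. More seriously, your assertion that the $\pm1$ in the numerators comes from boundary points and degenerate segments ``contributing lower-order terms that survive in the ratio'' cannot be right: all such boundary discrepancies amount to $O(q_2)$ terms in total, while $N\sim p/2$, so for fixed $q_1,q_2$ they contribute nothing to $\lim_{p\to\infty}N^+/N$ --- no lower-order term can survive a limiting ratio. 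The $\pm1$ is a leading-order discreteness effect in the weight variable: the exact parity-split sums of the densities are unequal. For instance, for $q_1,q_2$ both odd,
\begin{equation*}
N^{\text{odd}}\sim\sum_{l\ \text{odd}}N_l=\frac{p(q_1q_2+1)}{4q_1q_2},
\qquad
N^{\text{even}}\sim\sum_{l\ \text{even}}N_l=\frac{p(q_1q_2-1)}{4q_1q_2},
\end{equation*}
and combining these with $N^+=N^{\text{odd}}+\tfrac12 N^{\text{even}}$ yields $\frac{3q_1q_2+1}{4q_1q_2}$; the $+1$ is the imbalance between these two exact sums, not a boundary correction, and a computation that treats it as a boundary effect will not produce it. A smaller point: for the sphere the parity of $p$ never enters --- in all three cases the odd weights are entirely positive and the even weights are half positive regardless of $p\bmod2$ --- so the subsequence splitting you flag as a difficulty is needed only for Theorem \ref{thm:asymp}, not here.
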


The paper is organized as follows. In Section \ref{s:su} we use D'Angelo's construction to prove Theorem \ref{thm:SU(1,1)}. In Section \ref{s:comb} we will extend the results of \cite{LWW} to prove Theorem \ref{thm:coefsign}. This generalization will allow us, in Section \ref{s:general}, to determine signature pairs for a general finite subgroup of $U(1,1)$ and prove Theorems \ref{thm:asymp} and \ref{thm:sphere}.

\section{Results from $SU(1,1)$} \label{s:su}

In this section we recall some background material and prove Theorem \ref{thm:SU(1,1)}. We begin by giving precise definitions for the indefinite unitary groups $U(1,1)$ and $SU(1,1)$; namely,
$$
U(1,1)=\{A \in GL(2, \mathbb{C}): A^* J A = J\} \quad  \text{ where } \quad J= \begin{bmatrix}
1 && 0\\
0 && -1\\
\end{bmatrix}, \text{ and }
$$
$$SU(1,1)=\{A \in U(1,1): \det A = 1\}.$$

Let $\Gamma$ be a finite subgroup of the indefinite unitary group $SU(1,1)$. Simon proves in \cite[Thm 10.4.15]{S1} that every compact subgroup of $U(1,1)$ is abelian. This theorem can be used to determine the finite subgroups $\Gamma$ in $U(1,1)$. Since the only simple abelian groups are the cyclic groups of prime order, the finite subgroups of $U(1,1)$ must be these groups and their direct products. In the case of $SU(1,1)$ we can further refine our restrictions by noting that the group is isomorphic to $SL(2,\mathbb{R})$ as proven by Simon in \cite[Prop 10.4.1]{S1}. It is well known that the only finite subgroups of $SL(2,\mathbb{R})$ are cyclic. Thus, we can restrict our attention to the problem of cyclic groups in $SU(1,1)$ for the rest of this section. First consider the case of a cyclic group of order $p$. Recall that in $U(1,1)$, these groups can be considered as generated by the matrix
$$
s= \begin{bmatrix}
\omega^{q_1} && 0\\
0 && \omega^{q_2}\\
\end{bmatrix}.
$$
Here $\omega$ is a primitive $p$-th root of unity and $q_1,q_2,p$ have no common factor. Without loss of generality we assume that $q_1$ and $q_2$ are chosen minimally. For instance the parameters $(p;q_1,q_2)=(7;1,2)$ generates the same subgroup as $(p;q_1,q_2)=(7;2,4)$. Thus we assume $q_1$ and $q_2$ have no common factor. Since the determinant of $s$ must equal one in the $SU(1,1)$ case, $q_1+q_2$ must equal $p$. We can then simplify to the generator $s$ below without loss of generality:
$$
s= \begin{bmatrix}
\omega && 0\\
0 && \omega^{p-1}\\
\end{bmatrix}.
$$
Using the diagonal form of the generator $s$ allows us to simplify the invariant Hermitian polynomial $\Phi_{\Gamma}$ as follows:
$$
\Phi_{\Gamma}(z,\bar{z}):=\Phi_{p;1,p-1}(z,\bar{z})= 1- \prod_{\gamma \in \Gamma}(1-\langle \gamma z,z \rangle_1)=1-\prod_{j=1}^{p}(1-w^j |z_1|^2 + w^{j(p-1)}|z_2|^2)=\sum_{\alpha, \beta} C_{\alpha \beta} z^{\alpha}\bar{z}^{\beta} 
$$
where $C_{\alpha\beta}$ is the matrix of coefficients. We illustrate this approach with an example. Let $p=2$, then
$$
\Phi_{2;1,1}(z,\bar{z})= 1- \prod_{\gamma\in \Gamma}(1-\langle \gamma z,z \rangle)=\bar{z_1}^2 z_1^2 - 2 \bar{z_1} \bar{z_2} z_1 z_2 + \bar{z_2}^2 z_2^2.
$$
In matrix form this expands to:
$$
\Phi_{2;1,1}(z,\bar{z})=\begin{bmatrix}
 \bar{z_1}^2 \\ \bar{z_1} \bar{z_2} \\ \bar{z_2}^2
\end{bmatrix}^{T} 
\begin{bmatrix}
 1 & 0 & 0  \\ 
 0 & -2 & 0 \\ 
 0 & 0 & 1  \\ 
\end{bmatrix}
\begin{bmatrix}
 z_1^2 \\  z_1 z_2 \\  z_2^2
\end{bmatrix}.
$$
Since the eigenvalues of this matrix are $1,1, \text{ and } -2$, the resulting signature pair for the cyclic group of order 2 in $SU(1,1)$ is $(2,1)$. The associated CR mapping is given by
$$
\phi: Q(1,1) \mapsto Q(2,1), \quad \phi(z_1,z_2)=(z_1^2,z_2^2,\sqrt{2}z_1 z_2).
$$
We move on to the general case of an arbitrary cyclic subgroup of $SU(1,1)$. It is here useful to adapt results for cyclic subgroups of $SU(2)$ from \cite{D1, G1, G2}. D'Angelo \cite{D1} gave an explicit expression for $\Phi_{p;1,p-1}$ for cyclic subgroups of $SU(2)$; namely, 
$$
\Phi_{p;1,p-1}(z,\bar{z})=1-\prod_{j=1}^{p}(1-\omega^j |z_1|^2 - \omega^{j(p-1)}|z_2|^2)=|z_1|^{2p}+|z_2|^{2p}+\sum_{j=1}^{\lfloor \frac{p}{2} \rfloor}(-1)^{j-1}k_j|z_1|^{2j}|z_2|^{2j}
$$
for some positive (explicit) coefficients $k_j$. We can modify this result to the $SU(1,1)$ case by changing the necessary signs in the expression to get
$$
1-\prod_{j=1}^{p}(1-w^j |z_1|^2 + w^{j(p-1)}|z_2|^2)=|z_1|^{2p}+(-1)^p|z_2|^{2p}+\sum_{j=1}^{\lfloor \frac{p}{2} \rfloor}(-1)^j(-1)^{j-1}k_j|z_1|^{2j}|z_2|^{2j}. 
$$
Simplifying the powers of $-1$ then yields $\Phi_{p;1,p-1}$ in $SU(1,1)$ in the following form
$$
\Phi_{p;1,p-1}(z,\bar{z})=|z_1|^{2p}+(-1)^p|z_2|^{2p}-\sum_{j=1}^{\lfloor \frac{p}{2} \rfloor}k_j|z_1|^{2j}|z_2|^{2j}.
$$
From this final expression, we determine the signature pairs for all cyclic subgroups of $SU(1,1)$:
$$S(\Gamma)= \left(2, \frac{p}{2}\right) \text{ for }p\text{ even}, \quad  \left(1, \frac{p+1}{2}\right) \text{ for }p\text{ odd.}$$
This result is summarized in Theorem \ref{thm:SU(1,1)}.

\section{Proof of Theorem 2} \label{s:comb}
  
In this section, we recall and extend a combinatorial approach to $\Phi_{\Gamma}$ due to Loehr, Warrington, and Wilf \cite{LWW}. In particular, they gave a combinatorial method for determining information about the coefficients of the invariant polynomial $\Phi_{\Gamma}$ for cyclic subgroups $\Gamma$ of $U(2)$. Replacing $|z_1|^2$ and $|z_2|^2$ with $x$ and $y$ respectively, we introduce the following notation for $\Phi_{\Gamma_{p;q_1,q_2}}$:
\begin{equation}\label{eq:1-1}
\Phi_{p;q_1,q_2}(x,y) = 1- \prod_{j = 0}^{p - 1}(1 - x\omega^{q_1 j} - y\omega^{q_2 j}).
\end{equation}
In particular, Loehr, Warrington, and Wilf determined the signs of the coefficients of $1-\Phi_{p;q_1,q_2}(x,y)$ in the cases where either of the pairs $(p,q_1)$ or $(p,q_2)$ are coprime. They posed the general problem of determining the signs for arbitrary $p, q_1, q_2$ in \cite{LWW}. We give a complete answer to their question in this section by adapting and extending their techniques. We prove the set to which their results apply is the following
\[\{(p,q_1,q_2) : \gcd(p,q_1,q_2)= 1\}; \] this result is Theorem \ref{thm:coefsign}.

This section relies on two insights from \cite{LWW} in order to provide information about the coefficients of $\Phi_{p;q_1,q_2}(x,y)$. Let $a_{p;q_1,q_2}(r,s)$ denote the coefficient of $x^ry^s$ in $ \Phi_{p;q_1,q_2}(x,y)$. We define $l = \frac{rq_1 + sq_2}{p}$ as the weight of the monomial $a_{p;q_1,q_2}(r,s)x^ry^s$.   The first is to express the product expansion \eqref{eq:1-1} as the determinant of a 3-line circulant matrix. The second is to compute this determinant by counting permutations with certain fixed points.

We pause to consider the smallest case not covered by \cite{LWW}. For example,
\[\Phi_{6;2,3}(x,y) =  2 x^3 - x^6 + 3 y^2 + 6 x^3 y^2 - 3 y^4 + y^6.
 \] In this case, we have $(p,q_1,q_2) = (6,2,3)$ in which $\gcd(6,2) = 2 > 1$ and $\gcd(6,3) = 3 > 1$, thus neither of the pairs are coprime, but $\gcd(6,2,3) = 1$. The weight one terms are $2x^3$ and $3y^2$; the weight two terms are $-x^6$, $6x^3y^2$, and $-3y^4$; the weight three term is $y^6$. Notice that the odd weight terms are all positive, and the even weight terms alternate in sign, as predicted by the Loehr, Warrington, Wilf condition (as stated in Theorem 2). 

\subsection{Setup}
In order to set up the combinatorial argument used in the proof of Theorem \ref{thm:coefsign}, we first express the polynomial $\Phi_{p;q_1,q_2}$ as the determinant of a $3$-line $p \times p$ square circulant matrix. Recall an $n\times n$ circulant matrix is of the form $$\text{circ}(c_1,\dots,c_n)=
\begin{pmatrix}
c_1 & c_2 & \dots & c_{n-1} & c_n\\
c_n & c_1 & \dots & c_{n-2} & c_{n-1}\\
\vdots & & \ddots & &\\
c_3 & c_4 & \dots & c_1 & c_2\\
c_2 & c_3 & \dots & c_n & c_1\\
\end{pmatrix}.$$ A well-known property of circulant matrices is the following formula for the determinant  (see \cite{RM} for a proof of this formula and other properties of circulant matrices):
$$\det(\text{circ}(c_1,\dots,c_n))=\prod_{j = 1}^{n}\left(c_1 +c_2 \omega^j +\dots +c_n \omega^{(n-1)j}\right)$$ where $\omega$ is primitive $n$-th root of unity. Finally define $$d_j=\begin{cases}1 & j=1,\\
-x & j=q_1+1,\\
-y & j=q_2+1,\\
0 & \text{otherwise.}
\end{cases}$$
Thus we can express $\Phi_{p;q_1,q_2}$ as a circulant determinant:
$$\Phi_{p;q_1,q_2}(x,y)=1-\det(C_{p;q_1,q_2})$$ where $C_{p;q_1,q_2}=\text{circ}(d_1,\dots,d_p)$.
 
 \subsection{Computing the Determinant of $C_{p;q_1,q_2}$}
 
We expand the determinant of the $n \times n$ matrix $C=[a_{ij}]$:
\begin{equation}\label{eq:1-3}
det(C) = \sum_{\sigma \in S_n} sign(\sigma)a_{1,\sigma(1)}a_{2,\sigma(2)} \cdots a_{n,\sigma(n)}.
\end{equation}
We illustrate the case where $(p,q_1,q_2) = (6,2,3)$. Here we have
\[\det(C_{6;2,3})=\left[
\begin{array}{cccccc}
1 & 0 & -x & -y & 0 & 0\\
0 & 1 & 0 & -x & -y & 0\\
0 & 0 & 1 & 0 & -x & -y\\
-y & 0 & 0 & 1 & 0 & -x\\
-x & -y & 0 & 0 & 1 & 0\\
0 & -x & -y & 0 & 0 & 1\\
\end{array} \right]. \]

Following \cite{LWW}, we introduce the set of permutations $T_{p;q_1,q_2}(r,s)$ contributing to the monomial $x^r y^s$ in $\det(C_{p;q_1,q_2})$. We reproduce the definition below from \cite{LWW}.

We define the $T_{p;q_1,q_2}(r,s)$ as the set of all permutations $\sigma$ with
\begin{itemize}
\item $p - r - s$ fixed points ($0$-steps);
\item $r$ values of $j$ where $\sigma(j)-j$ is congruent to $q_1$ mod $p$ ($q_1$-steps);
\item $s$ values of $j$ where $\sigma(j)-j$ is congruent to $q_2$ mod $p$ ($q_2$-steps).
\end{itemize}

The set $T_{6;2,3}(3,2)$, for example, contains the following permutations: 
$$
(2 4 6 3 5),
(1 3 5 2 4),
(1 3 6 2 4),
(1 3 6 2 5),
(1 4 6 3 5),
(1 4 6 2 5),
$$
and corresponds to the coefficient of the monomial $6 x^3 y^2$ in the polynomial $\Phi_{6;2,3}(x,y)$. 

We show that each permutation in $T_{p;q_1,q_2}(r,s)$  has the same cycle structure when we restrict ourselves to the condition that $\gcd (p,q_1,q_2) = 1$. It is essential to the argument that we show identical cycle structure within $T_{p;q_1,q_2}(r,s)$, since this allows us to say that all permutations in $T_{p;q_1,q_2}(r,s)$ have the same sign, which gives us
\[|T_{p;q_1,q_2}(r,s)| = |a_{p;q_1,q_2}(r,s)|, \]
as well as a method for determining the signs of the coefficients. 

Furthermore, in everything that follows, we restrict ourselves to the case where both of the pairs $(p,q_1)$ and $(p,q_2)$ have common factors, since when either of these pairs are coprime, we have a situation equivalent to one in which $q_1 = 1$, which is covered in \cite{LWW}. Since we assume $q_1$ and $q_2$ are chosen minimally, we also assume $q_1$ and $q_2$ are coprime.

\subsection{Cycle structure}

We proceed similarly to \cite{LWW} and adapt some of their notation, making slight modifications to allow for an arbitrary value for $q_1$. We note here that Lemmas \ref{lem:weight}, \ref{lem:risili}, and \ref{lem:idcycle} are straightforward extensions of Lemmas 7, 8, and 12 of \cite{LWW}. Lemmas \ref{lem:weight} and \ref{lem:risili} are only stated here, since their proofs follow simply by replacing appropriate values of $1$ with $q_1$ in the proofs from \cite{LWW}. 

Let $\sigma \in T_{p;q_1,q_2}(r,s)$. We decompose $\sigma$ into disjoint cycles of length greater than 1 to exclude fixed points: 
\[\sigma = C_1C_2 \cdots C_k. \]
We let $C_i$ be represented by $(x_i; w_i)$, where $x_i$ is an arbitrary point in our cycle, the ``starting point", and $w_i$ is a word (in this case an $r_i + s_i$-tuple) which specifies each of the ``steps", in order, within $C_i$. Here, we define $r_i$ and $s_i$ as the number of $q_1$-steps and $q_2$-steps in $C_i$. We define $w_i(x) = w_{i_x}$, i.e. the $x$-th component of $w_i$. For example if $w_i = (2,3,2,2,3)$, then $w_i(5) = 3, w_i(3) = 2$, etc. So we have: 
\[w_i = (w_i(1),w_i(2),...,w_i(r_i+ s_i)). \]
We define 
\[\sigma^t(x_i) \equiv x_i + \sum_{j = 1}^tw_i(j) \Mod p\]
where $t$ is a non-negative integer. As in \cite{LWW} we take our mod $p$ statement to indicate a restriction to values from $1$ to $p$ as opposed to $0$ to $p - 1$. 

\begin{Lem}\label{lem:weight}
If $T_{p;q_1,q_2}(r,s)$ is nonempty, then $p|(rq_1 + sq_2)$. 
\end{Lem}
\begin{Lem}\label{lem:risili}
If $T_{p;q_1,q_2}(r,s)$ is nonempty, then $\gcd (r_i,s_i,l_i) = 1$ for $1 \leq i \leq k$. 
\end{Lem}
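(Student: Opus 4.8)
The plan is to follow the proof of Lemma 8 in \cite{LWW}, replacing the step size $1$ there by $q_1$ throughout, and to argue by contradiction. Fix a cycle $C_i = (x_i; w_i)$ of length $N = r_i + s_i$. Since the cycle closes up, its total displacement $\sum_{j=1}^{N} w_i(j) = r_i q_1 + s_i q_2$ is $\equiv 0 \pmod p$; exactly as in Lemma \ref{lem:weight} this shows that $l_i = (r_i q_1 + s_i q_2)/p$ is a positive integer. Writing the points visited by the cycle as $\sigma^t(x_i) \equiv x_i + \sum_{j=1}^{t} w_i(j) \pmod p$ for $0 \le t \le N$, the essential feature I will exploit is that $\sigma^0(x_i), \dots, \sigma^{N-1}(x_i)$ are pairwise distinct modulo $p$, since they are distinct entries of $\{1, \dots, p\}$ forming a single cycle of $\sigma$.

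Suppose toward a contradiction that $d = \gcd(r_i, s_i, l_i) > 1$, and write $r_i = d r'$, $s_i = d s'$, $l_i = d l'$. The heart of the argument is to produce a contiguous (cyclic) block of $w_i$ of length $N/d = r' + s'$ containing exactly $r'$ of the $q_1$-steps, and hence exactly $s'$ of the $q_2$-steps. To find it I would consider, for each cyclic starting index $j$, the window consisting of the next $N/d$ steps, and let $a_j$ denote the number of $q_1$-steps it contains. A double count shows $\sum_j a_j = (N/d)\, r_i = N r'$, so the $a_j$ have integer average exactly $r'$; moreover shifting the window by one changes $a_j$ by at most $1$. A discrete intermediate value argument around the cyclic sequence then forces $a_j = r'$ for some $j$, since the $a_j$ cannot all lie strictly above or strictly below their integer average and cannot jump across the value $r'$.

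Such a window runs from $\sigma^{t}(x_i)$ to $\sigma^{t + N/d}(x_i)$ (with indices read mod $N$), and its displacement is exactly $r' q_1 + s' q_2 = (r_i q_1 + s_i q_2)/d = p l'$, which is $\equiv 0 \pmod p$ precisely because $d \mid l_i$. Hence $\sigma^{t}(x_i) \equiv \sigma^{t+N/d}(x_i) \pmod p$. Since $d > 1$ gives $1 \le N/d < N$, these are two distinct points of the cycle $C_i$ that are congruent mod $p$, contradicting the distinctness recorded above; therefore $d = 1$. I expect the main obstacle to be the middle step: arranging the window count so that the averaging identity and the ``changes by at most one'' property combine into a clean integer intermediate value conclusion, and then checking that the resulting coincidence genuinely involves two different points of the cycle, which is exactly where all three divisibilities $d \mid r_i$, $d \mid s_i$, and $d \mid l_i$ are needed.
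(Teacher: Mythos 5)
Your proof is correct and takes the route the paper itself prescribes: the paper omits the argument for Lemma~\ref{lem:risili}, stating only that it follows from Lemma 8 of \cite{LWW} by replacing $1$ with $q_1$, and your cyclic-window averaging plus discrete intermediate-value argument is exactly that adaptation, with the three divisibilities $d \mid r_i$, $d \mid s_i$, $d \mid l_i$ entering where they must --- the first two to make the window length $N/d$ and the integer average $r'$ well defined, the third to make the window's displacement $r'q_1 + s'q_2 = l'p \equiv 0 \pmod{p}$, forcing two distinct points of the cycle to coincide in $[p]$. Nothing in your argument uses $q_1 = 1$, which confirms the paper's claim that the substitution $1 \to q_1$ is all that is needed.
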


Recall that we defined our modulo operation on the set $[p] = \{1,2,...,p\}$. Let $m \in \mathbb{Z}$ such that $1\leq m \leq p-1$, and define $j = \frac{\lcm (m,p)}{m}$. Now let $x_1, x_2, ..., x_n \in [p]$. 
\begin{Def}

We say the sequence $(x_1,x_2,...,x_n)$ is m-ordered on $[p]$ if: 
\begin{itemize}
\item $3 \leq n \leq j$
\item $x_i \equiv x_j \Mod{\gcd (p,m)} \forall i,j$
\item In the clockwise traversal of [p] via $m$-steps, starting with $x_1$, we hit $x_i$ before $x_j$ if and only if $i < j$. 
\end{itemize}
\end{Def}
We let $3 \leq n \leq j$ because our definition is not meaningful with less than three points (any two points which satisfy the second condition would be $m$-ordered). The second condition is due to the fact that our traversal by $m$-steps will not hit any element of $[p]$ which is not in the same equivalence class as $x_1$ modulo $\gcd(p,m)$. We also note here that our definition of $m$-ordered is invariant under rotations about the set $[p]$. \\

\begin{Lem}\label{lem:riequal}
If $\sigma \in T_{p;q_1,q_2}(r,s)$, and $\gcd (p,q_1,q_2) = 1$,  we must have $r_1 = r_2 = \cdots = r_k$ and $s_1 = s_2 = \cdots = s_k$. 
\end{Lem}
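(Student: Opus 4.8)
The plan is to reduce the statement to a single assertion about the ``slopes'' $r_i:s_i$ of the cycles, and then to close the argument using Lemma~\ref{lem:risili}. First I would decompose $\sigma = C_1\cdots C_k$ as above and record that, since each $C_i$ closes up, its net displacement satisfies $r_iq_1+s_iq_2 = l_ip$. Reducing this identity modulo $g_1:=\gcd(p,q_1)$ kills the $q_1$-term, so $g_1\mid s_iq_2$; as $g_1\mid q_1$ and $\gcd(q_1,q_2)=1$ we have $\gcd(g_1,q_2)=1$, hence $g_1\mid s_i$, and symmetrically $g_2:=\gcd(p,q_2)$ divides $r_i$. The key reduction is then: it suffices to show the triples $(r_i,s_i,l_i)\in\Z^3$ are pairwise parallel. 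Indeed, given that, all of them lie on a single line through the origin; writing $\pi$ for the primitive integer generator of that line, we get $(r_i,s_i,l_i)=n_i\pi$ with $n_i\in\Z_{>0}$, whence $\gcd(r_i,s_i,l_i)=n_i$, and Lemma~\ref{lem:risili} forces $n_i=1$ for every $i$. Since $l_i$ is determined by $(r_i,s_i)$ through $l_i=(r_iq_1+s_iq_2)/p$, the triples are parallel as soon as the pairs $(r_i,s_i)$ are. So the entire content is to show that any two cycles share a common slope $r_i:s_i$.

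The heart of the proof is therefore to establish this equality of slopes, and I would do so through the two residue projections of $\sigma$. Modulo $g_2$ only the $q_1$-steps move the trajectory, and they move it by the \emph{invertible} amount $q_1$; modulo $g_1$ only the $q_2$-steps move it, by the invertible amount $q_2$. Consequently, traversing $C_i$ winds exactly $r_i/g_2$ times around $\Z/g_2$ and $s_i/g_1$ times around $\Z/g_1$, passing through every residue class in each modulus. To compare two cycles $C_a,C_b$ I would fix a residue class mod $g_2$ -- a single $q_2$-orbit, cyclically ordered by $q_2$-steps -- and track the maximal runs of $q_2$-steps that each cycle performs while sitting in that class. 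This is exactly where the $m$-ordered notion enters (with $m=q_1$ and $m=q_2$): it records, in a rotation-invariant way, the order in which the two cycles enter and occupy the points of such an orbit. Since $\sigma$ is a bijection, the run-segments contributed by $C_a$, by $C_b$, and by the fixed points must be pairwise disjoint and together exhaust the orbit; if $r_a/s_a\neq r_b/s_b$, then the run-pattern of one cycle advances relative to the other around the orbit, and I would derive that the two cycles are forced to claim a common point, contradicting injectivity. This yields $r_as_b=r_bs_a$, i.e.\ parallel slopes.

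The main obstacle is precisely this last positional step. The naive enumerative identities are useless here: the even distribution of $q_1$-steps across the residue classes mod $g_2$ (and of $q_2$-steps mod $g_1$) holds automatically \emph{cycle by cycle}, so counting alone never separates the cycles. The genuine content is the rigidity of how distinct cycles interleave \emph{inside} a single residue orbit, which is what forces a collision once the slopes disagree; carrying this out is exactly where the $m$-ordered machinery must be pushed, and it is the reason that definition is introduced just before this lemma. I expect the most delicate case to be $p>g_1g_2$, where each pair of residues $(\bmod\,g_1,\,\bmod\,g_2)$ lifts to several points of $[p]$ and the collision has to be located among these lifts rather than read off directly. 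Lemma~\ref{lem:risili} then enters only at the very end, to upgrade ``parallel'' to ``equal.''
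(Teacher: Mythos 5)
Your proposal's outer reductions are fine, and the final one is genuinely different from the paper's: you reduce to showing the triples $(r_i,s_i,l_i)$ are pairwise parallel and then invoke Lemma \ref{lem:risili} (each triple is primitive) to upgrade ``parallel'' to ``equal,'' whereas the paper proves $s_k=s_l$ outright and then gets $r_k=r_l$ directly from the weight equations $r_kq_1+s_kq_2=\lambda_kp$ and $r_lq_1+s_lq_2=\lambda_lp$, since $r_k-r_l$ is then a multiple of $p$ of absolute value less than $p$. Your divisibility observations ($g_1\mid s_i$, $g_2\mid r_i$, the winding counts) are also correct. None of this, however, is the hard part.

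The proposal has a genuine gap at exactly the point you yourself flag as ``the main obstacle'': the claim that if $r_a/s_a\neq r_b/s_b$ then the two cycles ``are forced to claim a common point'' is asserted, not proved, and it is essentially as hard as the lemma itself. No mechanism is given for producing the collision: you do not specify what the ``run-segments'' are, why a slope mismatch makes one pattern ``advance'' relative to the other, or why advancement yields a shared point rather than a consistent interleaving. Moreover, the paper's actual argument is not a slope-mismatch collision argument at all. It fixes one cycle $C_k$, marks the landing points $d_i$ and takeoff points $e_i$ of its $q_2$-steps, chooses the matching permutation $U$, and partitions the points fixed by $C_k$ into sectors $V_j$, the points $q_1$-ordered between $e_j$ and $d_{U(j)}$. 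It then shows that a $q_1$-step of a second cycle $C_l$ cannot leave its sector (disjointness of $C_k$ and $C_l$ prevents it from landing on $d_{U(j)}$), while each $q_2$-step of $C_l$ advances the sector index by exactly one; since $C_l$ must close up, its $q_2$-steps sweep cyclically through all $s_k$ sectors, giving $s_l\geq s_k$, and exchanging the roles of $C_k$ and $C_l$ gives equality. That sector construction and its monotonicity property constitute the entire content of the lemma, and they are precisely what your proposal leaves unproved.
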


\begin{proof}
The proof follows the structure of the corresponding proof in \cite{LWW}. We simply adapt their notation and approach to accommodate arbitrary values of $q_1$. 

We let $C_k,C_l$ be two distinct, nontrivial cycles in $T_{p;q_1,q_2}(r,s)$. We write $C_k = (x_k,w_k)$ and group the $q_1$-steps in $w_k$ preceding each $q_2$-step such that:
 \[w_k = (q_1^{\rho_1},q_2,...,q_1^{\rho_{s_k}},q_2)\]
 where $\displaystyle \sum_{i}\rho_i = r_k$. Note that use of exponents in $w_k$ represents a series of ``steps" in a row within the word. We wish to show that $s_l = s_k$. 
 
We first argue that $s_l \geq s_k$. If $s_k = 0$, there is nothing to prove, so we assume $s_k \geq 1$, that is we assume $C_k$ contains at least one $q_2$-step. We will proceed by partitioning the points in $[p]$ fixed by $C_k$ into sets which we will define in terms of the $q_2$-steps, $\{e_i\}$, and the images of the $q_2$-steps, $\{d_i\}$. 

So we set $d_1 = x_k$, and $e_1 = C_k^{\rho_1}(x)$. If $s_k > 1$, further define $d_i = C_k(e_{i - 1})$, $e_i = C_k^{\rho_i}(d_i)$, for $1 \leq i \leq r_k$. 

We assert that there exists a unique permutation $U$ such that for all $x$ in the set \[\{x \in [p]:(e_j,x,d_{U(j)})\text{ is }q_1\text{-ordered}\},\] $x$ is not in $\{d_j\}$. In other words, we assert that for each $q_2$-step ($e$ point), we have a $d$ point which is ``hit" first when we traverse $[p]$ in $q_1$-steps starting from the given $e$ point. Our assertion could only be false if there exists an $x \in \{e_i\}$ such that $x \not\equiv d_i \Mod{\gcd(p,q_1)}$ for each $i$. But we know by construction of $\{e_i\}$ that $x = C_k^{\rho_j}(d_j)$ for some $j$ which implies that $x \equiv d_j \Mod{\gcd(p,q_1)},$ since $x$ is obtained from $d_j$ entirely by $q_1$-steps within $C_k$, so the assertion holds. 

Now we define
 \begin{equation}\label{eq:1-4}
 V_j = \{x \in [p]: (e_j,x,d_{U(j)}) \text{ is }q_1\text{-ordered}\}.
 \end{equation}
 These are the points in $[p]$ not hit by $C_k$ when we move via $q_1$-steps starting with $e_j$, stopping before the first $d$ point encountered ($d_{U(j)}$). We know all points in $V_j$ must be fixed by $C_k$ by definition of $U$.
 
 We also define functions $\pi, f, g$ as:
 \[\pi(x) = x + q_2 \Mod p,\]
 \[f(x) = x - q_1 \Mod p,\]
 \[g(x) = x + q_1 \Mod p, \]
 \\\\
 and the set $W_j$ as:
 \begin{equation}\label{eq:1-5}
 \begin{aligned}
W_j &= \{y \in [p] :  \left(f(\pi(e_j)),y,g(e_{j + 1})\right) \text{ is } q_1\text{-ordered}\}\\
 &=\{y \in [p] : \left(f(d_{j + 1}),y,g(e_{j + 1})\right) \text{ is } q_1\text{-ordered}\}.
  \end{aligned}
  \end{equation}
  We use the functions $f,g$ to specify the endpoints of the interval $[d_{j + 1},e_{j + 1}] \subset [p]$. Note that $C_k(x) \neq x$ holds for each $x \in W_j,$ and for every $j$ by definition of $e_j$.

We claim that for each  $x \in V_j$, if $\pi(x) \in C_k$, $\pi(x) \in W_j$. So let $x \in V_j$ such that $\pi(x) \in C_k$. Note that all points $x$ in $V_j$ must be such that $(e_j,x,d_{U(j)})$ is $q_1$-ordered. Then we know that $(\pi(e_j),\pi(x),\pi(d_{U(j)})) = (d_{j + 1},\pi(x),\pi(d_{U(j)}))$ must also be $q_1$-ordered since the $q_1$-ordered condition is invariant under a rotation about $[p]$. Note
 \[W_j \cup V_{j + 1}  = \{x \in [p]: (f(d_{j + 1}),x,d_{U(j + 1)}) \text{ is }q_1\text{-ordered}\},\]
which means $ \pi(x)$ is in $ W_j \cup V_{j + 1}$ which in turn implies that $\pi(x)$ is in  $W_j$ since all points in $V_{j + 1}$ are fixed by $C_k$. 
 
 We also claim that for each $x$ such that $C(x) = x$, there exists a $V_j$ such that $x \in V_j$. \\

 For each $ x \in [\gcd(q_1,p)]$, there exists an  $e_j$ such that $e_j \equiv x \Mod{\gcd(q_1,p)}$, since we know we have at least a single $q_2$-step, and to complete the cycle, we must return to the same equivalence class modulo $\gcd(q_1,p)$. Thus, since $q_1,q_2$ coprime, we know there exists an $e_j$ for each element of $[\gcd(q_1,p)]$. \\
 
Now let $C_k(x) = x$. Then because there exists an  $e_j$ such that $e_j \equiv x \Mod{\gcd(q_1,p)}$, we know $x$ is in $\displaystyle \bigcup_j V_j$ since by construction of the $V_j$ sets, $\displaystyle \bigcup_j V_j$ partitions all points congruent to some $e_j$ modulo $\gcd(q_1,p)$. 

Also since the $V_j$ sets do not overlap, we know $x$ lies within in a unique $V_j$. So let $C_l(x) \neq x$ which implies $C_k(x) = x $ which gives us that $x$ is in $V_j$ for a unique $j$. Now if $x$ is a $q_1$-step of $C_l$, $C_l(x)$ is in $V_j$ as well, since $q_1 \neq q_2$ and $C_k,C_l$ disjoint. If $x$ is a $q_2$-step of $C_l$, we know $(e_j,x,d_{U(j)})$ is $q_1$-ordered which tells us $(\pi(e_j),\pi(x),\pi(d_{U(j)})) = (d_{j + 1},\pi(x),\pi(d_{U(j)}))$ is also $q_1$-ordered. Then as noted above, $(d_{j + 1},\pi(x),\pi(d_{U(j)}))$ being  $q_1$-ordered tells us that $\pi(x)$ is in $W_j \cup V_{j + 1}$, which implies that $\pi(x)$ is in $V_{j + 1}$. Iterating, we see that $x$ orbits through all $V_j$'s which implies  $s_l \geq s_k$. Arguing with the roles of $C_k,C_l$ switched, $s_k = s_l$. 

Now we also know that $r_kq_1 + s_k q_2 = \lambda_k p$ and $r_lq_1 + s_k q_2 = \lambda_l p$, since we assumed $\sigma$ was in $T_{p;q_1,q_2}(r,s)$. Then $r_k - r_l = (\lambda_k - \lambda_l)p$. Since $s_k = s_l > 0$, we know $0 \leq r_k,r_l < p$, thus
\[-p < r_k - r_l < p.\]
Since we know $p \mid (r_k - r_l)$, we conclude that $r_k= r_l$. 
\end{proof}

\begin{Ex}
\end{Ex}
We consider a permutation $\sigma \in T_{24;3,16}(16,6)$. 
We have $\sigma = C_1C_2$, where
\begin{equation}\label{eq:ex1}
\begin{aligned}
C_1 =& (20,23,2,18,21,24,3,19,22,1,4)\\
C_2 =& (7,10,13,5,8,11,14,6,9,12,15),
\end{aligned}
\end{equation}
and the two fixed points of $\sigma$ are $16$ and $17$. We write $C_1$ in the form $(x_i;w_i)$
\[C_1 = (20;q_1^2q_2q_1^3q_2q_1^3q_2) = (20;3^2 \cdot16 \cdot 3^3 \cdot16 \cdot 3^3\cdot 16),\]
and observe that $r_1 = 8$ and $s_1 = 3$. We also have from definitions of $d_i,e_i$ in Lemma \ref{lem:riequal} that 
\begin{equation}\label{eq:ex2}
\begin{aligned}
(d_1, d_2, d_3) =& (20,18,19)\\
(e_1, e_2, e_3) =& (2,3,4). 
\end{aligned}
\end{equation}
Also, the permutation $U$ defined in Lemma \ref{lem:riequal}  written in one-line notation, happens to be the identity permutation, since $d_{U(1)} = 20,d_{U(2)} = 18, d_{U(3)} = 19$. We now write out the $V_j$ sets
\begin{equation}\label{eq:ex3}
\begin{aligned}
V_1 =& \{5,8,11,14,17\}\\
V_2 =& \{6,9,12,15\}\\
V_3 =& \{7,10,13,16\},\\
\end{aligned}
\end{equation}
observing that they partition all elements of $[24]$ not hit by $C_1$. We also write out the $W_j$ sets
\begin{equation}\label{eq:ex3}
\begin{aligned}
W_1 =& \{18,21,24,3\}\\
W_2 =& \{19,22,1,4\}\\
W_3 =& \{20,23,2\}. \\
\end{aligned}
\end{equation}
Note that for any $x$ in some $V_j$, if $x$ is a $16$-step in some cycle $C_i$, $i\neq 1$, then we must have that $C_i(x)$ is in $V_k$ for $k \neq j$. Take $x = 15$, for example, and suppose $15$ is a $16$-step of $C_i$. Then $C_i(15) = 7$, and $7$ is in $V_3$. Indeed this is the case in $C_2$. 

\begin{Lem}\label{lem:idcycle}
If $\sigma $ is in $T_{p;q_1,q_2}(r,s)$, and $\gcd (p,q_1,q_2) = 1$, we must have that $k = \gcd (r,s,l), r_i = r/k, s_i = s/k$, for all $i$, and all elements of $T_{p;q_1,q_2}(r,s)$ have identical cycle structure. Subsequently, $sgn(\sigma) = (-1)^{r + s + \gcd (r,s,l)}$. 
\end{Lem}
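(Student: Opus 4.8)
The plan is to derive this lemma as a short consequence of Lemmas \ref{lem:weight}, \ref{lem:risili}, and \ref{lem:riequal}, together with the standard formula expressing the sign of a permutation in terms of its cycle lengths. First I would record the additivity of the step-counts and weights across the cycle decomposition $\sigma = C_1 \cdots C_k$: since each of the $r$ $q_1$-steps and each of the $s$ $q_2$-steps of $\sigma$ lies in exactly one nontrivial cycle (fixed points contribute no steps), we have $\sum_{i=1}^k r_i = r$ and $\sum_{i=1}^k s_i = s$, and hence
\[\sum_{i=1}^k l_i = \frac{1}{p}\sum_{i=1}^k (r_i q_1 + s_i q_2) = \frac{rq_1 + sq_2}{p} = l.\]

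Next I would invoke Lemma \ref{lem:riequal}, which gives $r_1 = \cdots = r_k$ and $s_1 = \cdots = s_k$. Combined with the sums above, this forces $r_i = r/k$ and $s_i = s/k$ for every $i$, and therefore $l_i = \frac{(r/k)q_1 + (s/k)q_2}{p} = l/k$ as well. The key step is then to feed these common values into Lemma \ref{lem:risili}: since $\gcd(r_i,s_i,l_i) = 1$, we obtain $\gcd(r/k,s/k,l/k) = 1$, and pulling the common factor of $k$ back out yields
\[\gcd(r,s,l) = k\,\gcd\!\left(\tfrac{r}{k},\tfrac{s}{k},\tfrac{l}{k}\right) = k.\]
This simultaneously establishes $k = \gcd(r,s,l)$ and the identities $r_i = r/k$, $s_i = s/k$.

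For the claim that all elements of $T_{p;q_1,q_2}(r,s)$ share the same cycle structure, I would observe that the above forces every nontrivial cycle to have length $r_i + s_i = (r+s)/k$; since there are exactly $k$ such cycles together with $p - r - s$ fixed points, the cycle type of $\sigma$ depends only on $p$, $r$, and $s$ (through $k = \gcd(r,s,l)$), so any two permutations in $T_{p;q_1,q_2}(r,s)$ are conjugate. The sign then follows from $\operatorname{sgn}(\sigma) = \prod_{i=1}^k (-1)^{(r+s)/k - 1}$: collecting exponents gives $(-1)^{(r+s) - k}$, and since $(-1)^{-k} = (-1)^{k}$ and $k = \gcd(r,s,l)$, this equals $(-1)^{r+s+\gcd(r,s,l)}$.

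I expect no serious obstacle here: the genuine structural content, namely that every cycle carries the same number of $q_1$- and $q_2$-steps, has already been extracted in Lemma \ref{lem:riequal}, so what remains is essentially bookkeeping. The one point that warrants a little care is the divisibility passage $\gcd(r/k,s/k,l/k) = 1 \Rightarrow \gcd(r,s,l) = k$, which requires $k$ to divide each of $r$, $s$, and $l$ \emph{exactly}; this is precisely what the equal distribution of steps across the $k$ cycles guarantees, so I would make sure the additivity $\sum l_i = l$ and the equalities $r_i = r/k$, $s_i = s/k$ are in place before applying it.
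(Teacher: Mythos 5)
Your proposal is correct and follows essentially the same route as the paper's proof: additivity of the step counts, Lemma \ref{lem:riequal} to equalize $r_i, s_i$ (hence $l_i = l/k$), Lemma \ref{lem:risili} plus the identity $\gcd(kr_i,ks_i,kl_i) = k\gcd(r_i,s_i,l_i)$ to get $k = \gcd(r,s,l)$, and then the standard sign formula. The only cosmetic difference is that you compute the sign as a product of cycle signs $(-1)^{(r+s)/k-1}$ while the paper uses the parity of $p$ minus the total number of cycles; these are the same formula.
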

\begin{proof}
Recall that we defined $k$ to be the number of disjoint cycles in our permutation $\sigma = C_1 C_2 \cdots C_k$. Let $\gcd (p,q_1,q_2) = 1$ and let $\sigma$ be in $T_{p;q_1,q_2}(r,s)$. We know $\sum_{i = 1}^k r_i = r$ and $\sum_{i = 1}^k s_i = s$, so the result of Lemma \ref{lem:riequal} gives us that $r_i = r/k$ and $s_i = s/k$ for all $i$. Then for each i,
\[l_i = \frac{r_iq_1 + s_iq_2}{p} = \frac{\frac{rq_1 + sq_2}{p}}{k} = \frac{l}{k}.\]

Now Lemma \ref{lem:risili} gives us that $\gcd (r_i,s_i,l_i) = 1$. Then: 
\[k = k \gcd (r_i,s_i,l_i) = \gcd (kr_i,ks_i,kl_i) = \gcd (r,s,l).\]
Also note that the sign of $\sigma$ is the parity of the quantity $p - c$ where $c$ denotes the total number of cycles in $\sigma$, including $1$-cycles. Recall that we have $p - r -s$ $1$-cycles, which gives us
\[sgn(\sigma) = (-1)^{p - (k + (p - r - s))} = (-1)^{r + s + \gcd (r,s,l)}.\]
\end{proof}

We are now able to formulate the following combinatorial formula for the coefficients, which is an extension of a result from Section 2 of \cite{LWW}.
\begin{Lem}\label{lem:coeffs}
Given $\gcd(p,q_1,q_2) = 1$, if $a_{p;q_1,q_2}(r,s)$ is defined as the coefficient of $x^ry^s$ in $\Phi_{p;q_1,q_2}(x,y)$,
\[a_{p;q_1,q_2}(r,s) = (-1)^{\gcd(r,s,l)+1}|T_{p;q_1,q_2}(r,s)|.\]
\end{Lem}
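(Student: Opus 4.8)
The plan is to read off the coefficient $a_{p;q_1,q_2}(r,s)$ directly from the permutation expansion \eqref{eq:1-3} of $\det(C_{p;q_1,q_2})$ and then invoke the uniform-sign result of Lemma \ref{lem:idcycle}. Since $\Phi_{p;q_1,q_2}(x,y) = 1 - \det(C_{p;q_1,q_2})$ and the constant term of the determinant is $1$ (from the all-ones diagonal), for $(r,s) \neq (0,0)$ we have $a_{p;q_1,q_2}(r,s) = -[x^r y^s]\det(C_{p;q_1,q_2})$, where $[x^r y^s]$ denotes extraction of the coefficient of $x^r y^s$. The whole argument is then a matter of matching monomials to permutations and tracking signs.

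First I would determine which permutations contribute a nonzero multiple of $x^r y^s$. In the circulant $C_{p;q_1,q_2} = \mathrm{circ}(d_1,\dots,d_p)$ the $(i,\sigma(i))$ entry equals $d_{((\sigma(i)-i)\bmod p)+1}$, which is nonzero only when $\sigma(i)-i$ is congruent to $0$, $q_1$, or $q_2$ modulo $p$, contributing the factor $1$, $-x$, or $-y$ respectively. Hence a permutation produces the monomial $x^r y^s$ exactly when it has $p-r-s$ fixed points, $r$ $q_1$-steps, and $s$ $q_2$-steps, that is, exactly when $\sigma \in T_{p;q_1,q_2}(r,s)$, and in that case $\prod_i a_{i,\sigma(i)} = (-x)^r(-y)^s = (-1)^{r+s}x^r y^s$. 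Collecting terms gives
\[
[x^r y^s]\det(C_{p;q_1,q_2}) = (-1)^{r+s}\sum_{\sigma \in T_{p;q_1,q_2}(r,s)} \mathrm{sign}(\sigma).
\]

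The decisive step is to use Lemma \ref{lem:idcycle}, which asserts that every $\sigma \in T_{p;q_1,q_2}(r,s)$ has the common sign $\mathrm{sign}(\sigma) = (-1)^{r+s+\gcd(r,s,l)}$; this is precisely where the hypothesis $\gcd(p,q_1,q_2)=1$ enters, through the identical cycle structure. The sum above therefore collapses to $(-1)^{r+s+\gcd(r,s,l)}\,|T_{p;q_1,q_2}(r,s)|$, so that
\[
[x^r y^s]\det(C_{p;q_1,q_2}) = (-1)^{2(r+s)+\gcd(r,s,l)}|T_{p;q_1,q_2}(r,s)| = (-1)^{\gcd(r,s,l)}|T_{p;q_1,q_2}(r,s)|.
\]
Finally, the extra sign from $\Phi = 1 - \det(C_{p;q_1,q_2})$ converts this into $a_{p;q_1,q_2}(r,s) = (-1)^{\gcd(r,s,l)+1}|T_{p;q_1,q_2}(r,s)|$, as claimed. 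The argument is short precisely because Lemma \ref{lem:idcycle} has already done the genuine work; the only real obstacle here is the careful bookkeeping of the three sources of signs — the $(-1)^{r+s}$ from the $-x,-y$ entries, the permutation sign $(-1)^{r+s+\gcd(r,s,l)}$, and the overall minus from $1-\det(C_{p;q_1,q_2})$ — and verifying that they combine to leave exactly $(-1)^{\gcd(r,s,l)+1}$.
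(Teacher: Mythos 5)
Your proposal is correct and follows essentially the same route as the paper's own proof: expand $\det(C_{p;q_1,q_2})$ over permutations, identify the contributors to $x^r y^s$ as exactly $T_{p;q_1,q_2}(r,s)$, invoke Lemma \ref{lem:idcycle} for the common sign $(-1)^{r+s+\gcd(r,s,l)}$, and combine it with the $(-1)^{r+s}$ from the entries $-x,-y$ and the overall minus from $\Phi = 1-\det(C_{p;q_1,q_2})$. Your bookkeeping is if anything slightly more explicit than the paper's (e.g.\ noting the restriction to $(r,s)\neq(0,0)$ and the entry formula $d_{((\sigma(i)-i)\bmod p)+1}$), but the argument is the same.
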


\begin{proof}
Lemma \ref{lem:idcycle} gives us two new pieces of information about the coefficients $a_{p;q_1,q_2}(r,s)$. First is the following equality:
\begin{equation}\label{eq:1-6}
|T_{p;q_1,q_2}(r,s)| = |a_{p;q_1,q_2}(r,s)|.
\end{equation}
 Our determinant formula gave us that the absolute value of $a(r,s)$ is equal to the absolute value of the sum of the signs of the permutations in $T$. However, we now know that the signs of permutations in $T$ are uniquely determined by $p,q_1,q_2,r,s$, which implies all the signs of the permutations in $T$ are the same, hence the above equality \eqref{eq:1-6}. Furthermore we now have a formula for the signs of the elements of $T$, and so we can give a complete formula for the coefficients. Since $\Phi$ is defined as $1-\det(C_\Phi)$, we have the desired result
 
 \begin{equation}\label{eq:1-7}
 \begin{aligned}
 a_{p;q_1,q_2}(r,s) & = (-1) (-1)^{r + s}|T_{p;q_1,q_2}(r,s)|(-1)^{r + s + \gcd (r,s,l)}\\
 & = (-1)^{\gcd (r,s,l)+1}|T_{p;q_1,q_2}(r,s)|. 
 \end{aligned}
 \end{equation}
 Thus if the weight $l$ is odd, $\gcd (r,s,l)$ is odd, and hence $a_{p;q_1,q_2}(r,s)>0$. If the weight $l$ is even, $\gcd (r,s,l)$ alternates between even and odd, and hence $a_{p;q_1,q_2}(r,s)$ alternates sign.
 
 \end{proof}

\subsection{Construction of elements in $T_{p;q_1,q_2}(r,s)$}
In Lemma \ref{lem:weight} we see that $T_{p;q_1,q_2}(r,s)$ being nonempty implies $ p \mid (rq_1 + sq_2)$. Because of equation \eqref{eq:1-7} and the result of Lemma \ref{lem:weight}, we have
\begin{equation}\label{eq:1-8}
a_{p;q_1,q_2}(r,s) \neq 0 \Rightarrow p \mid (rq_1 + sq_2).
\end{equation}
However, we'd like to say precisely when our coefficients are nonzero. 
Thus, this section will serve to prove the converse of \eqref{eq:1-8} (note this is generally not true in the higher dimensional setting; see \cite{G2} for an example). The lemmas and theorems required to prove this follow in the same fashion as in \cite{LWW}, and we omit the proofs that follow immediately from their analogues in \cite{LWW}. We use their notation as much as possible to allow the reader to easily compare.

So we assume $rq_1 + sq_2 = lp$ and $gcd(r,s,l) = 1$. Recall that $k = gcd(r,s,l)$, so $k = 1$ means we are constructing a permutation $\sigma$ which should consist of only a single cycle. We define the lattice path
\begin{equation}\label{eq:1-9}
\nu = [\nu_0 = (0,0),\nu_1,\nu_2,...,\nu_{r + s} = (r,s)]
\end{equation}
where either $\nu_i - \nu_{i -1} = (1,0)$ or $\nu_i - \nu_{i -1} = (0,1)$ for $i < 0$.

Let $v$ be an $(r + s)$-tuple in $\{q_1,q_2\}^{r + s}$ with exactly $r$ $q_1$'s and $s$ $q_2$'s. Note that $\{q_1,q_2\}^{r + s}$ is an $r + s$-dimensional vector space which implies each of $v$'s components must be either a $q_1$ or a $q_2$. If $\nu_i - \nu_{i -1} = (1,0)$, we let the $i$-th entry in $v$ be a $q_1$. Else we have $\nu_i - \nu_{i -1} = (0,1)$, in which case we set the $i$-th entry to a $q_2$. We will show that $(x;v)$ is a well-defined element of $T_{p;q_1,q_2}(r,s)$ for all $x$ in $[p]$. 

We recursively define $\nu$ with $\nu_i = (x_i,y_i)$:
\begin{equation}\label{eq:1-10}
\nu_i = 
\begin{cases}
nu_i + (1,0),\text{ if }sx_i \leq ry_i,\\
nu_i + (0,1),\text{ if }sx_i \leq ry_i.\\

\end{cases}
\end{equation}

We omit the proof of the following lemma since it is exactly the same as it appears in \cite{LWW}. 
\begin{Lem}\label{lem:rs1}
Given $\nu$ constructed as above, let $0 \neq i, j \leq r + s$ and write $\nu_i = (x_i,y_i)$ and $\nu_j = (x_j,y_j)$. If $b = y_j - y_i$ and $a = x_j - x_i$, then $|(as - br)| \leq r + s - 1$. 
\end{Lem}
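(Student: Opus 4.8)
The plan is to track a single scalar ``discrepancy'' attached to each vertex of the path. For a lattice point $\nu_i=(x_i,y_i)$ I would set $\delta_i = sx_i - ry_i$, which is, up to scaling, the signed distance of $\nu_i$ from the diagonal segment joining $(0,0)$ to $(r,s)$: points strictly above the diagonal have $\delta_i<0$ and points strictly below have $\delta_i>0$. The first observation is that the quantity we must control is exactly a difference of two discrepancies,
\[
as-br = s(x_j-x_i)-r(y_j-y_i)=\delta_j-\delta_i,
\]
so the lemma reduces to the one-dimensional claim that all of $\delta_0,\delta_1,\dots,\delta_{r+s}$ lie in a common interval of length at most $r+s-1$; that immediately yields $|\delta_j-\delta_i|\le r+s-1$ for every pair of indices.

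To bound the $\delta_i$ I would analyze how the discrepancy changes along a single step. A horizontal step $\nu_i-\nu_{i-1}=(1,0)$ raises the discrepancy by exactly $s$, while a vertical step $\nu_i-\nu_{i-1}=(0,1)$ lowers it by exactly $r$. The greedy recursion \eqref{eq:1-10} fires a horizontal step precisely when $\delta_{i-1}\le 0$ and a vertical step precisely when $\delta_{i-1}>0$, which gives two matching one-sided invariants. For the upper bound: $\delta$ can only increase on a horizontal step, and such a step is taken only from a vertex with $\delta_{i-1}\le 0$, so $\delta_i=\delta_{i-1}+s\le s$; since vertical steps decrease $\delta$ and $\delta_0=0$, an easy induction gives $\delta_i\le s$ for all $i$. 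For the lower bound: $\delta$ can only decrease on a vertical step, taken only from a vertex with $\delta_{i-1}>0$; because $\delta_{i-1}=sx_{i-1}-ry_{i-1}$ is an integer, $\delta_{i-1}>0$ forces $\delta_{i-1}\ge 1$, whence $\delta_i=\delta_{i-1}-r\ge 1-r$, and again $\delta_0=0\ge 1-r$ yields $\delta_i\ge 1-r$ for all $i$.

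Combining the two bounds, every $\delta_i$ lies in the interval $[\,1-r,\ s\,]$, whose length is $s-(1-r)=r+s-1$; hence $|\delta_j-\delta_i|\le r+s-1$, which is exactly $|as-br|\le r+s-1$. The one genuinely delicate point is the lower bound, where integrality of $\delta$ is what promotes the strict inequality $\delta_{i-1}>0$ to $\delta_{i-1}\ge 1$ and thereby secures the sharp constant $r+s-1$; without invoking integrality one would only obtain the weaker $|as-br|<r+s$. I would also take care to pin down the precise direction of the test in \eqref{eq:1-10} (as displayed it contains a typo, with both branches reading $sx_i\le ry_i$), since the entire argument hinges on horizontal steps being exactly the ones fired from $\delta_{i-1}\le 0$.
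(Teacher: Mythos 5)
Your proof is correct and coincides with the argument the paper defers to: the paper omits this proof, citing \cite{LWW}, and the proof there is precisely your discrepancy invariant --- showing by induction on the greedy rule that $sx_i - ry_i$ stays in the interval $[1-r,\,s]$ (the upper bound from horizontal steps firing only at nonpositive discrepancy, the lower bound from integrality at vertical steps), whence any two vertices differ by at most $r+s-1$. Your reading of the typo in \eqref{eq:1-10} (horizontal steps fire exactly when $sx_i \le ry_i$, vertical steps otherwise) is also the intended one.
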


\begin{Lem}\label{lem:sarb}
If $\gcd(p,q_1,q_2) = 1$, and $a,b,r,s,p,q_1,q_2$ are integers such that $p \mid (aq_1 + bq_2)$ and $p \mid (rq_1 + sq_2)$, then $sa -rb = 0$ or $|sa - rb| \geq p$. 
\end{Lem}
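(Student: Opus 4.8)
The plan is to prove the stronger divisibility statement $p \mid (sa - rb)$; once this is in hand, the desired dichotomy is immediate, since a nonzero integer divisible by $p$ has absolute value at least $p$, so $sa - rb$ is either $0$ or satisfies $|sa - rb| \geq p$. Thus the entire lemma reduces to a single modular claim, and the geometric-looking conclusion is really just a statement about multiples of $p$.

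To establish $p \mid (sa - rb)$, I would first eliminate one of the $q_i$ from the two hypotheses. Multiplying $aq_1 + bq_2 \equiv 0 \pmod{p}$ by $s$ and $rq_1 + sq_2 \equiv 0 \pmod{p}$ by $b$ and subtracting cancels the $q_2$ terms, leaving $(sa - rb)q_1 \equiv 0 \pmod{p}$. By the symmetric manipulation (multiply the first congruence by $r$, the second by $a$, and subtract) the $q_1$ terms cancel and one obtains $(sa - rb)q_2 \equiv 0 \pmod{p}$. Writing $D = sa - rb$, we now have the three divisibilities $p \mid Dq_1$, $p \mid Dq_2$, and trivially $p \mid Dp$.

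The key step is then to exploit the hypothesis $\gcd(p,q_1,q_2) = 1$. By B\'ezout's identity for three integers there exist $u,v,w \in \Z$ with $up + vq_1 + wq_2 = 1$. Multiplying through by $D$ gives $D = u(Dp) + v(Dq_1) + w(Dq_2)$, and since $p$ divides each of the three summands on the right, we conclude $p \mid D$, which is exactly what was needed.

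The only point requiring care is the passage from $p \mid Dq_1$ and $p \mid Dq_2$ to $p \mid D$: one cannot appeal to coprimality of $p$ with either $q_1$ or $q_2$ individually, precisely because this lemma governs the regime where $(p,q_1)$ and $(p,q_2)$ are allowed to share common factors. The three-term B\'ezout relation handles this cleanly and is the one place where the full strength of $\gcd(p,q_1,q_2) = 1$ is used, so I do not anticipate any genuine obstacle beyond organizing this bookkeeping correctly.
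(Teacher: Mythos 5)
Your proof is correct, and it takes a genuinely different route from the paper's. Both arguments begin with the same elimination identity: the paper computes $s(aq_1+bq_2)-b(rq_1+sq_2)=(sa-rb)q_1$, writes it as $(sa-rb)q_1=p(sm-bl)$, and then must show $q_1\mid(sm-bl)$, which it does by a remainder-and-contradiction argument that relies on the \emph{standing section assumption} $\gcd(q_1,q_2)=1$ and on $q_1,q_2>1$ (the cases $q_1=1$ or $q_2=1$ are handled separately by citing Loehr--Warrington--Wilf). You instead perform the elimination twice, obtaining both $p\mid(sa-rb)q_1$ and $p\mid(sa-rb)q_2$, and finish with the three-term B\'ezout identity $up+vq_1+wq_2=1$ to conclude the stronger statement $p\mid(sa-rb)$, from which the dichotomy is immediate. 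This buys you two things the paper's proof does not have: your argument uses exactly the hypothesis in the lemma statement, $\gcd(p,q_1,q_2)=1$, with no hidden appeal to coprimality of $q_1$ and $q_2$, so it establishes the lemma in the generality in which it is stated; and it is uniform, requiring no case split and no citation for the $q_i=1$ cases. What the paper's version buys in exchange is continuity with the LWW framework: the integers $m$ and $l$ it manipulates are the ``weights'' that carry combinatorial meaning elsewhere in the section, so its bookkeeping stays in the vocabulary used by the surrounding lemmas. As a minor stylistic point, the conclusion $p\mid(sa-rb)$ that you isolate is a cleaner intermediate statement than the paper's $|sa-rb|=\bigl|p(sm-bl)/q_1\bigr|$, and could be quoted directly in later arguments.
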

\begin{proof} When $q_1$ or $q_2$ is 1, the result appears in \cite{LWW}, so we assume $q_1, q_2>1$.
Let $aq_1 + bq_2 = mp$ and $rq_1 + sq_2 = lp$. Then
\[|saq_1 -rbq_1| = |saq_1 + sbq_2 - sbq_2 -rbq_1| = |s(aq_1 + bq_2) - b(rq_1 + sq_2)| = |p(sm - bl)|,\]
which implies
\[|sa -rb| = \left|\frac{p(sm - bl)}{q_1} \right|.\]
We claim $q_1 \mid (sm - bl)$. Suppose for contradiction that $q_1 \nmid (sm - bl)$. Then 
\[sm - bl = q_1\gamma + \delta,\]
 where $\gamma, \delta$ are integers, and $0 < \delta < q_1$. 
 Observe 
\[q_2(sm-bl) = q_2(q_1 \gamma + \delta) = q_1(al-rm) \]
which implies
\[q_2 \delta = q_1(al-rm -q_2 \gamma).\]

But since $\gcd(q_1,q_2) = 1$ and $q_1 \neq 1$, we must have $q_1 \mid \delta$, which is a contradiction since we said that $0 < \delta < q_1$. Thus we must have that $q_1 \mid (sm-bl)$. This implies $ \frac{(sm - bl)}{q_1}$ is an integer. Since this integer is either $0$ or at least $1$, we have the desired result because 
\[|sa -rb| = \left|\frac{p(sm - bl)}{q_1} \right|. \]

\end{proof}
The proofs of Lemmas \ref{lem:wellcycle} and \ref{lem:wellperm} are omitted since they are in \cite{LWW}. 
\begin{Lem}\label{lem:wellcycle}
$(x;v)$ is a well-defined cycle with $r$ $q_1$-steps and $s$ $q_2$-steps. 
\end{Lem}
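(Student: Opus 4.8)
The plan is to unpack the phrase ``well-defined cycle'' into two separate requirements and verify each: that iterating the steps recorded by $v$ from $x$ returns to $x$ after exactly $r+s$ steps, and that the $r+s$ points visited before returning are pairwise distinct (so $(x;v)$ really describes a single $(r+s)$-cycle inside a permutation of $[p]$). The step-count claim is free, since by construction $v$ has exactly $r$ entries equal to $q_1$ and $s$ equal to $q_2$. Closing up is also immediate: with $\sigma^t(x) \equiv x + \sum_{j=1}^t v(j) \pmod p$, the full sum is $\sum_{j=1}^{r+s} v(j) = rq_1 + sq_2 = lp \equiv 0 \pmod p$, so $\sigma^{r+s}(x) = x$. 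Hence the entire substance is the distinctness of $\sigma^0(x),\dots,\sigma^{r+s-1}(x)$, which I would prove by contradiction.

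Suppose $\sigma^i(x) = \sigma^j(x)$ for some $0 \le i < j \le r+s-1$. Writing $\nu_i = (x_i,y_i)$ and $\nu_j = (x_j,y_j)$, the steps taken between positions $i$ and $j$ consist of $a := x_j - x_i$ many $q_1$-steps and $b := y_j - y_i$ many $q_2$-steps, with $a,b \ge 0$ and $a+b = j-i \ge 1$; the assumed coincidence says precisely that $p \mid (aq_1 + bq_2)$. Now I would feed this into the two lemmas already in hand. Lemma~\ref{lem:rs1} gives $|as - br| \le r+s-1$, and since the permutation being built has $p-r-s \ge 0$ fixed points we have $r+s \le p$, so $|as-br| < p$. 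On the other hand Lemma~\ref{lem:sarb}, applied to $p\mid(aq_1+bq_2)$ and $p\mid(rq_1+sq_2)$, forces $as-br = 0$ or $|as-br| \ge p$. Only $as = br$ is consistent with both.

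It then remains to show that $as = br$ together with $p\mid(aq_1+bq_2)$ cannot hold for a proper intermediate pair. Setting $d = \gcd(r,s)$, $r = dr'$, $s = ds'$ with $\gcd(r',s') = 1$, the relation $as = br$ rewrites as $as' = br'$, and coprimality of $r',s'$ yields $(a,b) = \alpha(r',s')$ for a single integer $\alpha$ with $0 \le \alpha \le d$. Since $\gcd(r,s,l) = 1$ gives $\gcd(d,l) = 1$, substituting into $aq_1 + bq_2 = \alpha(r'q_1 + s'q_2) = \alpha lp/d$ and imposing $p \mid (aq_1+bq_2)$ forces $d \mid \alpha l$, hence $d \mid \alpha$, so $\alpha \in \{0,d\}$. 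But $\alpha = 0$ gives $(a,b) = (0,0)$, contradicting $j > i$, while $\alpha = d$ gives $(a,b) = (r,s)$ and thus $j - i = r+s$, impossible when $j \le r+s-1$. This contradiction establishes distinctness and completes the proof.

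I expect this closing number-theoretic step to be the main obstacle: it is the one place where the hypothesis $\gcd(r,s,l) = 1$ is genuinely exploited, and one must also dispatch the degenerate cases $r = 0$ or $s = 0$ (where $d$, $r'$, or $s'$ collapse) separately. In those cases the word $v$ is a constant string, the orbit is a single arithmetic progression with common difference $q_2$ or $q_1$, and distinctness follows directly from $\gcd(s,l) = 1$ (respectively $\gcd(r,l) = 1$) by the same divisibility count. Everything outside this step reduces to bookkeeping once Lemmas~\ref{lem:rs1} and~\ref{lem:sarb} are available.
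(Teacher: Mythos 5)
Your proof is correct and follows essentially the intended route: the paper itself omits this proof (deferring to \cite{LWW}), and the argument you give---closure from $rq_1+sq_2=lp$, distinctness by playing Lemma \ref{lem:rs1} against Lemma \ref{lem:sarb} to force $as=br$, then the $\gcd(r,s,l)=1$ divisibility step ruling out proper proportional pairs $(a,b)=\alpha(r',s')$---is precisely the argument from \cite{LWW}, adapted to arbitrary $q_1$, that Lemmas \ref{lem:rs1} and \ref{lem:sarb} were set up to feed. Your explicit handling of the final number-theoretic step (where $\gcd(r,s,l)=1$ is genuinely used) and of the degenerate cases $r=0$ or $s=0$ is sound, so there are no gaps.
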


Now let $\gcd(r,s,l) = k > 1$. Consider $(x;v)$ where $v$ is determined by the lattice path $\nu$ from $(0,0)$ to $(r/k,s/k)$ constructed at the beginning of the section. Lemma \ref{lem:wellcycle} tells us that $(x;v)$ is a valid cycle. 

\begin{Lem}\label{lem:wellperm}
Let $k = \gcd(r,s,l)$ and $\nu$ be as above and then define $C_j$ = $(1 + (j - 1)(q_2 - q_1); v)$. Then 
\[\sigma = C_1 C_2 \cdots C_k\]
is a well-defined element of $T_{p;q_1,q_2}(r,s)$. 
\end{Lem}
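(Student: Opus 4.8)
The plan is to verify, for $\sigma = C_1C_2\cdots C_k$, the three defining properties of $T_{p;q_1,q_2}(r,s)$: that each $C_j$ is a genuine cycle, that the cycles are pairwise disjoint, and that the total numbers of $q_1$- and $q_2$-steps are $r$ and $s$ (so that exactly $p-r-s$ points are fixed). The first property is already in hand. Since $v$ is the word read off the lattice path \eqref{eq:1-9} from $(0,0)$ to $(r/k,s/k)$ built by the recursion \eqref{eq:1-10}, Lemma \ref{lem:wellcycle} applied to this path shows that each $(x;v)$ is a well-defined cycle containing exactly $r/k$ $q_1$-steps and $s/k$ $q_2$-steps. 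Because all $k$ cycles use the \emph{same} word $v$ and differ only in their base point, the point set of $C_j$ is the translate $S_1+(j-1)\delta \pmod p$ of the point set $S_1$ of $C_1$, where $\delta = q_2 - q_1$.

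Granting disjointness for the moment, the bookkeeping is immediate: the $k$ cycles move $k\cdot(\tfrac{r}{k}+\tfrac{s}{k}) = r+s$ distinct points, so $\sigma$ fixes the remaining $p-r-s$ points, and summing the per-cycle step counts gives $k\cdot\tfrac{r}{k}=r$ total $q_1$-steps and $k\cdot\tfrac{s}{k}=s$ total $q_2$-steps. The degree bound $r+s\le p$, coming from $\Phi_{p;q_1,q_2}$ having degree $p$, guarantees there is room for these $r+s$ points. Thus, once disjointness is established, $\sigma\in T_{p;q_1,q_2}(r,s)$, consistent with the cycle count $k=\gcd(r,s,l)$ forced by Lemma \ref{lem:idcycle}.

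The heart of the argument, and the step I expect to be the main obstacle, is disjointness, i.e. $S_1\cap(S_1+m\delta)=\varnothing$ for $1\le m\le k-1$. A point of overlap yields two lattice points on the path, with coordinate differences $\alpha,\beta$, satisfying $\alpha q_1+\beta q_2\equiv m(q_2-q_1)\pmod p$, equivalently $p\mid\bigl((\alpha+m)q_1+(\beta-m)q_2\bigr)$. Setting $a=\alpha+m$, $b=\beta-m$ and invoking Lemma \ref{lem:sarb} against the weight relation $p\mid(rq_1+sq_2)$ forces $sa-rb=0$ or $|sa-rb|\ge p$. Here $sa-rb=(s\alpha-r\beta)+m(r+s)$, and the closest-to-the-diagonal construction of the path gives, via the scaled form of Lemma \ref{lem:rs1}, the bound $|s\alpha-r\beta|\le r+s-k$. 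Since $m\ge 1$ makes $m(r+s)$ strictly exceed this bound, the alternative $sa-rb=0$ is impossible, and that case is disposed of cleanly.

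The genuinely delicate case is $|sa-rb|\ge p$, where the crude size bounds do not suffice: one must use not merely the existence of integer solutions to the congruence but the fact that the \emph{achievable} pairs $(\alpha,\beta)$ are exactly those realized along the balanced path, together with the specific offset $\delta=q_2-q_1$. The mechanism I would exploit is that translation by $\delta$ acts on the diagonal coordinate $s'x-r'y$ (with $s'=s/k$, $r'=r/k$) by a fixed shift of $-(r+s)/k$, which would carry the narrow diagonal strip of width $r+s-k$ cut out by Lemma \ref{lem:rs1} entirely off of itself, were it not for a possible modular wraparound governed by $\gcd(p,q_1)$. Following Loehr, Warrington, and Wilf, the remaining work is to rule out this wraparound for $1\le m\le k-1$ by tracking the translated cycle against this strip; this geometric control of the interaction between the $\delta$-translation and the balanced-word structure is precisely where the argument must go beyond modular counting, and is where I expect the real effort (and the appeal to \cite{LWW}) to lie.
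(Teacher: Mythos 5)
Your reduction is the right one (disjointness of the $k$ translated cycles is the whole content, the step/fixed-point bookkeeping is immediate, and Lemma \ref{lem:wellcycle} covers each individual cycle), and your exclusion of the branch $sa-rb=0$ is correct. But there is a genuine gap exactly where you flag it: the branch $|sa-rb|\ge p$ of Lemma \ref{lem:sarb} is never handled, and within your setup it \emph{cannot} be handled, because you invoked Lemma \ref{lem:sarb} with the total counts $(r,s)$. Writing $r'=r/k$, $s'=s/k$, your quantity factors as $sa-rb=(s\alpha-r\beta)+m(r+s)=k\bigl(s'a-r'b\bigr)$, and your bounds only confine it to the interval $[k,\,k(r+s)-k]$, which can reach past $p$: in the paper's own example $T_{24;3,16}(16,6)$ one has $k=2$ and $r+s=22$, so your estimate allows values up to $42>24=p$. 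Moreover, every pair $(a,b)$ with $p\mid(aq_1+bq_2)$ automatically satisfies the unprimed dichotomy of Lemma \ref{lem:sarb}, so that application carries no usable information, and no further geometry of the balanced path at that scale will close the case; the ``wraparound'' analysis you defer to \cite{LWW} is not where the effort lies.

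The fix --- and this is the argument of \cite{LWW}, to which the paper defers since it omits this proof entirely --- is to apply Lemma \ref{lem:sarb} with the \emph{per-cycle} data: the single-cycle weight relation $r'q_1+s'q_2=l'p$ (where $l'=l/k$) supplies the hypothesis $p\mid(r'q_1+s'q_2)$, while the overlap congruence $p\mid(aq_1+bq_2)$, with $a=\alpha+m$, $b=\beta-m$, is unchanged. Lemma \ref{lem:rs1}, applied directly to the path ending at $(r',s')$ (no scaling needed), gives $|s'\alpha-r'\beta|\le r'+s'-1$, hence for $1\le m\le k-1$
\[
0<(m-1)(r'+s')+1\le s'a-r'b\le k(r'+s')-1=r+s-1\le p-1<p,
\]
which contradicts both branches of Lemma \ref{lem:sarb} simultaneously. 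So the delicate case you anticipate does not exist; it is an artifact of scaling up to $(r,s)$. With disjointness established this way, your bookkeeping paragraph does complete the proof.
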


Combining the results of Lemmas \ref{lem:idcycle} and \ref{lem:wellperm}, and again recalling the sign convention $\Phi=1-\det(C_\Phi)$ now gives the result summarized in Theorem \ref{thm:coefsign}.

\section{Results for $U(1,1)$} \label{s:general}

For the more general case of $U(1,1)$ we turn back to the more general cyclic group $\Gamma_{p;q_1,q_2}$ generated by
$$
s= \begin{bmatrix}
w^{q_1} && 0\\
0 && w^{q_2}\\
\end{bmatrix}
$$
Here again, we are restricting ourselves to the case of minimal $q_1,q_2$. For this case, the invariant polynomial gives a slightly different expression; namely,
$$
\Phi_{p;q_1,q_2}(z,\bar{z})= 1- \prod_{\gamma}(1-\langle \gamma z,z \rangle_1)=1-\prod_{j=1}^{p}(1-w^{jq_1} |z_1|^2 + w^{jq_2}|z_2|^2)
$$
As given by Theorem \ref{thm:coefsign} the coefficients in a related polynomial are subject to a number of conditions. For the polynomial:
$$
\Phi_{p;q_1,q_2}(z,\bar{z})= 1- \prod_{\gamma}(1-\langle \gamma z,z \rangle)=1-\prod_{j=1}^{p}(1-w^{jq_1} |z_1|^2 - w^{jq_2}|z_2|^2)
$$
 The coefficients $C_{rs}|z_1|^{2r}|z_2|^{2s}$ that are non-zero are exactly those where $p$ divides $ra+sb$, and $C_{rs}$ is positive if and only if $\gcd(r,s,\frac{ra+sb}{p})$ is odd. We can then use the transformation: $$|z_2|^2=y,\quad y \to -y$$
 to adapt Theorem \ref{thm:coefsign} for the case of $U(1,1)$.

\begin{Lem}\label{lem:weightineq}
Defining $N_l$ as the number of terms of weight $l$ the following inequalities hold
$$\left| N_l(\Gamma_{p;q_1,q_2})-\frac{lp}{q_1 q_2}\right| \leq 1 $$ when $1\leq l \leq q_1$ and 
$$\left| N_l(\Gamma_{p;q_1,q_2})-\frac{(q_2-l)p}{q_2(q_2-q_1)}\right| \leq 1 $$ when $q_1+1\leq l \leq q_2$.
\end{Lem}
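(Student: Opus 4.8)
The plan is to reinterpret $N_l$ as a count of lattice points on a line segment and then estimate that count by the elementary theory of arithmetic progressions. By Theorem \ref{thm:coefsign} together with the converse of \eqref{eq:1-8} established in the construction subsection, the monomials of weight $l$ that actually appear in $\Phi_{p;q_1,q_2}$ are in bijection with the pairs $(r,s)$ of nonnegative integers satisfying
\[
r q_1 + s q_2 = l p, \qquad r + s \le p,
\]
the second constraint coming from the fact that $1-\Phi_{p;q_1,q_2}$ is a product of $p$ linear forms in $(x,y)$, hence has total degree at most $p$. Thus $N_l$ equals the number of lattice points of the line $rq_1 + sq_2 = lp$ lying in the triangle $\{r\ge 0,\ s\ge 0,\ r+s\le p\}$. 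First I would record that, since $\gcd(q_1,q_2)=1$, the integer solutions of $rq_1+sq_2=lp$ form a single arithmetic progression in which consecutive points differ by $(q_2,-q_1)$; in particular their $r$-coordinates form an arithmetic progression with common difference $q_2$.

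The heart of the argument is then a purely geometric case analysis determining the range of admissible $r$-values, i.e.\ where the line exits the triangle. I would compute the three relevant boundary intersections: the line meets the $s$-axis at $r=0$, meets the $r$-axis at $r = lp/q_1$, and meets the hypotenuse $r+s=p$ at $r = \frac{(q_2-l)p}{q_2-q_1}$ (obtained by substituting $s=p-r$). Writing $r+s = r\frac{q_2-q_1}{q_2}+\frac{lp}{q_2}$ shows that $r+s$ is increasing along the line, so the constraint $r+s\le p$ becomes an upper bound on $r$. When $1\le l\le q_1$ one checks that both first-quadrant endpoints $(lp/q_1,0)$ and $(0,lp/q_2)$ already satisfy $r+s\le p$, so the admissible range is the full interval $[0,\,lp/q_1]$; when $q_1+1\le l\le q_2$ the $r$-intercept $lp/q_1$ exceeds $p$, the segment leaves the triangle through the hypotenuse, and the admissible range shrinks to $[0,\,\frac{(q_2-l)p}{q_2-q_1}]$.

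Finally I would invoke the standard estimate that the number of terms of an arithmetic progression of common difference $q_2$ contained in a closed interval of length $R$ differs from $R/q_2$ by less than $1$. Applying this with $R = lp/q_1$ in the first regime gives $\bigl|N_l - \tfrac{lp}{q_1q_2}\bigr|\le 1$, and with $R = \frac{(q_2-l)p}{q_2-q_1}$ in the second regime gives $\bigl|N_l - \tfrac{(q_2-l)p}{q_2(q_2-q_1)}\bigr|\le 1$, which are exactly the two claimed inequalities. I expect the main obstacle to be the geometric bookkeeping of the second paragraph: one must verify carefully that the constraint $r+s\le p$ is inactive precisely when $l\le q_1$ and otherwise replaces the $r$-axis intercept by the hypotenuse intercept, and that in both regimes every lattice point counted has $s\ge 0$ (guaranteed by $r\le lp/q_1$) and corresponds to a genuinely nonzero coefficient via Theorem \ref{thm:coefsign}. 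The counting step itself is routine once the correct interval endpoints are pinned down.
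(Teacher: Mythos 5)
Your proposal is correct and takes essentially the same route as the paper: both identify $N_l$ with the number of nonnegative lattice points on the line $rq_1+sq_2=lp$ inside the triangle $r+s\le p$, split into the same two cases according to whether the hypotenuse constraint is active (i.e.\ $l\le q_1$ versus $q_1+1\le l\le q_2$), and finish by counting an arithmetic progression in an interval. The only cosmetic difference is that you parametrize solutions by $r$ with common difference $q_2$, while the paper parametrizes by $s$ with common difference $q_1$.
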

\begin{proof} Fix $p, q_1, q_2$. We want to count the number of non-negative integer solutions $(r, s)$ such that $rq_1+sq_2 = lp$
and $r+s \leq p$ where $1 \leq l \leq q_2$. Note that $r = \frac{lp−sq_2}{q_1}$, so $r$ is an integer for one out of every $q_1$ values of $s$. Also
note that the two lines $rq_1 + sq_2 = lp$ and $r + s = p$ intersect at the point: $$r=\frac{(q_2-l)p}{q_2-q_1},\quad s=\frac{(l-q_1)p}{q_2-q_1}.$$
We can then split the proof into two cases: $1 \leq l \leq q_1$ and $q_1+1 \leq l \leq q_2$. In the case that $1 \leq l \leq q_1$ we have the intersection point occurring at a negative value of $s$, so we start the count at $s=0$. The number of non-zero terms is then within 1 of
$$\frac{\frac{lp}{q_2}-0}{q_1}=\frac{lp}{q_1q_2}.$$
 In the case where $q_1+1 \leq l \leq q_2$ the number of non-zero terms is within 1 of
$$\frac{\frac{lp}{q_2}-\frac{(l-q_1)p}{q_2-q_1}}{q_1}=\frac{(q_2-l)p}{q_2(q_2-q_1)}.$$
\end{proof}
\begin{Lem}\label{lem:evenodd}
Define $N^{\text{odd}}$ and $N^{\text{even}}$ as the number of odd and even weight terms, respectively. When $p$ is large, the asymptotic behavior is given by
$$N(\Gamma_{p;q_1,q_2}) \sim \frac{p}{2}.$$ Furthermore, $$\lim_{q_1 \to \infty} \lim_{q_2 \to \infty} \lim_{p \to \infty} \frac{N^{\text{odd}}(\Gamma_{p;q_1,q_2})}{N(\Gamma_{p;q_1,q_2})}=\lim_{q_1 \to \infty} \lim_{q_2 \to \infty} \lim_{p \to \infty} \frac{N^{\text{even}}(\Gamma_{p;q_1,q_2})}{N(\Gamma_{p;q_1,q_2})}.$$
\end{Lem}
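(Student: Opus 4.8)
The plan is to reduce both assertions to the per-weight estimates of Lemma \ref{lem:weightineq} and then evaluate the resulting finite sums. Writing $l$ for the weight, every nonzero monomial has a weight satisfying $1 \le l \le q_2$ (this is exactly the range covered by the two cases of Lemma \ref{lem:weightineq}), so that $N = \sum_{l=1}^{q_2} N_l$, and likewise $N^{\mathrm{odd}} = \sum_{l\text{ odd}} N_l$ and $N^{\mathrm{even}} = \sum_{l\text{ even}} N_l$, both sums running over $1 \le l \le q_2$.

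First I would establish $N \sim p/2$. Summing the leading terms supplied by Lemma \ref{lem:weightineq} over the two ranges gives
$$\sum_{l=1}^{q_1}\frac{lp}{q_1q_2} + \sum_{l=q_1+1}^{q_2}\frac{(q_2-l)p}{q_2(q_2-q_1)} = \frac{p(q_1+1)}{2q_2} + \frac{p(q_2-q_1-1)}{2q_2} = \frac{p}{2},$$
using the elementary evaluations of $\sum l$ and $\sum (q_2-l)$. Since Lemma \ref{lem:weightineq} pins each $N_l$ to within $1$ of its leading term, the accumulated error is at most $q_2$, which is $o(p)$ for fixed $q_1,q_2$; hence $N = \tfrac{p}{2} + O(q_2) \sim \tfrac{p}{2}$.

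For the odd/even comparison, since $N^{\mathrm{odd}} + N^{\mathrm{even}} = N$ it suffices to show that each iterated limit equals $\tfrac12$, and I would compute $\lim_{p\to\infty} N^{\mathrm{odd}}/N$ first. Dividing the sum of leading terms over odd $l$ by $N \sim p/2$ cancels the factor $p$ and absorbs the $O(q_2)$ error, leaving
$$\lim_{p\to\infty}\frac{N^{\mathrm{odd}}}{N} = \frac{2}{q_1q_2}\!\!\sum_{\substack{l\text{ odd}\\1\le l\le q_1}}\!\!l \;+\; \frac{2}{q_2(q_2-q_1)}\!\!\sum_{\substack{l\text{ odd}\\q_1<l\le q_2}}\!\!(q_2-l).$$
As $q_2 \to \infty$ with $q_1$ fixed, the first term is $O(q_1/q_2) \to 0$. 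For the second term the crucial point is that summing the slowly varying linear function $q_2-l$ over the odd integers of $(q_1,q_2]$ recovers asymptotically half of the full sum $\sum_{q_1<l\le q_2}(q_2-l) \sim (q_2-q_1)^2/2$, the parity discrepancy being only $O(q_2)$; thus the second term equals $\frac{2}{q_2(q_2-q_1)}\big(\tfrac{(q_2-q_1)^2}{4}+O(q_2)\big) = \frac{q_2-q_1}{2q_2} + o(1) \to \tfrac12$. Sending $q_1 \to \infty$ leaves $\tfrac12$, and the identity $N^{\mathrm{odd}}+N^{\mathrm{even}}=N$ then forces the even ratio to the same value, so the two iterated limits coincide.

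I expect the main obstacle to be this equidistribution step: rigorously justifying that the odd-weight subsum is asymptotically half of the full subsum after the $q_2\to\infty$ limit, i.e. showing that the parity-dependent lower-order terms are genuinely $O(q_2)$ and hence negligible against the $(q_2-q_1)^2$ main term. One must also be careful about the order in which the three limits and the Lemma \ref{lem:weightineq} error bounds are applied: the $O(q_2)$ error from summing the per-weight estimates is harmless only after dividing by $N\sim p/2$ and letting $p\to\infty$ first, so the innermost $p\to\infty$ limit must be taken before the parity bookkeeping in $q_2$.
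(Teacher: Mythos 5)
Your proof is correct and follows essentially the same route as the paper: both parts reduce to summing the per-weight estimates of Lemma \ref{lem:weightineq}, yielding $N = \tfrac{p}{2} + O(q_2) \sim \tfrac{p}{2}$, and then comparing the parity-restricted sums against $N$ after letting $p \to \infty$ first. The only real difference is in the finish: where the paper evaluates the odd- and even-weight sums exactly (illustrating the case $q_1,q_2$ both odd and asserting the other parity cases behave the same), you replace that case analysis with the uniform observation that the odd-$l$ subsum is half of the full sum up to an $O(q_2)$ discrepancy, and then obtain the even ratio for free from $N^{\mathrm{odd}} + N^{\mathrm{even}} = N$ --- a slightly cleaner, parity-independent way of reaching the same limits.
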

\begin{proof} Using the results from the previous lemma, we can sum the number of terms of each weight $l$ for $1\leq l\leq q_2$ to approximate the ratios. Thus $N(\Gamma_{p;q_1,q_2})$ is within $q_2$ of 
$$
 \sum_{l=1}^{q_1}\frac{lp}{q_1q_2} + \sum_{l=q_1+1}^{q_2}\frac{(q_2-l)p}{q_2(q_2-q_1)}=\frac{p}{2}.
$$ 
Similarly we can approximate the numbers of even and odd weight terms. The exact sums depend on the parity of $q_1$ and $q_2$, but the asymptotic behavior of the ratios $N^{+}/N$ and $N^{-}/N$ is the same in each case. We illustrate the computation with $q_1$ and $q_2$ both odd below. By Lemma 
$$
N^{\text{even}}(\Gamma_{p;q_1,q_2}) \sim \sum_{l=1}^{ \frac{q_1-1}{2} }\frac{2lp}{q_1q_2} + \sum_{l= \frac{q_1+1}{2} }^{ \frac{q_2-1}{2} }\frac{(q_2-2l)p}{q_2(q_2-q_1)} = \frac{p ( q_1 q_2-1)}{4 q_1 q_2}.
$$$$
N^{\text{odd}}(\Gamma_{p;q_1,q_2}) \sim \sum_{l=0}^{ \frac{q_1-1}{2} }\frac{(2l+1)p}{q_1q_2} + \sum_{l= \frac{q_1+1}{2} }^{ \frac{q_2-1}{2} }\frac{(q_2-(2l+1))p}{q_2(q_2-q_1)} = \frac{p ( q_1 q_2+1)}{4 q_1 q_2}.
$$

Thus, in the limit where $p,q_1,q_2$ go to infinity, there will be an equal number of even and odd weight terms.
\end{proof}

\begin{proof}[Proofs of Theorems 3 and 4]
      To complete the proof of Theorem \ref{thm:asymp}, we break down the possible values of $p,q_1,q_2$ into 4 cases.
\begin{itemize}
\item Case 1: $q_1$ odd and $q_2$ odd. We are looking at $r,s$ such that $rq_1+sq_2=lp$. Applying the transformation $|z_2|^2 \to -|z_2|^2$ allows us to apply results from Theorem \ref{thm:coefsign}. We then know that a term is positive if and only if $s+\gcd(r,s,l)$ is odd. For $l$ even, $r$ and $s$ must be even also, leading to all negative terms. For odd weight $l$, half the terms have even $s$ and are thus negative, and half of the terms have odd $s$ and are thus positive. In summary, we are left with only half the odd weight terms as positive, so with Lemmas \ref{lem:weightineq} and \ref{lem:evenodd} we can then calculate the asymptotic positivity ratio. By Lemma \ref{lem:evenodd}, we have
$$
N^{\text{odd}}(\Gamma_{p;q_1,q_2})\sim \sum_{l=0}^{ \frac{q_1-1}{2} }\frac{(2l+1)p}{q_1q_2} + \sum_{l= \frac{q_1+1}{2} }^{ \frac{q_2-1}{2} }\frac{(q_2-(2l+1))p}{q_2(q_2-q_1)} = \frac{p (q_2q_1+1)}{4 q_1 q_2}.
$$
Since all even weight terms are negative and odd weight terms alternate in sign, we have $\frac{N^{\text{+}}}{N}=\frac{N^{\text{odd}}}{2N}$. Furthermore, by Lemma \ref{lem:evenodd}, we have $\frac{N^{+}}{N}\sim \frac{N^{\text{odd}}}{p}$, which gives  $$\lim_{p \to \infty} \frac{N^+(\Gamma_{p;q_1,q_2})}{N(\Gamma_{p;q_1,q_2})}=\frac{(q_2q_1+1)}{4 q_1 q_2}.
$$
\item  Case 2: $q_1$ odd and $q_2$ even.  This case proceeds exactly analogous to case 1, giving half the odd weight terms as positive. Using Lemmas \ref{lem:weightineq} and \ref{lem:evenodd} we can then get:
$$
N^{\text{odd}}(\Gamma_{p;q_1,q_2})\sim \sum_{l=0}^{ \frac{q_1-1}{2} }\frac{(2l+1)p}{q_1q_2} + \sum_{l= \frac{q_1+1}{2} }^{ \frac{q_2}{2}-1 }\frac{(q_2-(2l+1))p}{q_2(q_2-q_1)} = \frac{p ( q_1 (q_2-q_1)+1)}{4 q_1 (q_2-q_1)}.
$$ Since all even weight terms are negative and odd weight terms alternate in sign, we have $\frac{N^{\text{+}}}{N}=\frac{N^{\text{odd}}}{2N}$, and hence the asymptotic positivity ratio is again given by $\frac{N^{\text{odd}}}{2N}\sim \frac{N^{\text{odd}}}{p}$, which gives $$\lim_{p \to \infty} \frac{N^+(\Gamma_{p;q_1,q_2})}{N(\Gamma_{p;q_1,q_2})}=\frac{q_1(q_2-q_1)+1}{4q_1(q_2-q_1)}.
$$

\item Case 3: $p$ odd, $q_1$ even and $q_2$ odd. In the case that $q_2$ is odd, it is important to distinguish the parity of the order $p$. For this case, we can follow a similar argument from Theorem \ref{thm:coefsign} to get half the even weight terms as positive. Using Lemmas \ref{lem:weightineq} and \ref{lem:evenodd} we then get
$$
N^{\text{even}}(\Gamma_{p;q_1,q_2})\sim \sum_{l=1}^{ \frac{q_1}{2} }\frac{2lp}{q_1q_2} + \sum_{l= \frac{q_1}{2}+1 }^{ \frac{q_2-1}{2} }\frac{(q_2-2l)p}{q_2(q_2-q_1)} = \frac{p (q_2(q_2-q_1)+1)}{4 q_2( q_2-q_1)}.
$$
Since all odd weight terms are negative and even weight terms alternate in sign, we have $\frac{N^{\text{+}}}{N}=\frac{N^{\text{even}}}{2N}$. Thus the asymptotic positivity ratio is given by $\frac{N^{\text{even}}}{2N}\sim \frac{N^{\text{even}}}{p}$, and hence $$\lim_{p \to \infty} \frac{N^+(\Gamma_{p;q_1,q_2})}{N(\Gamma_{p;q_1,q_2})}=\frac{q_2(q_2-q_1)+1}{4 q_2( q_2-q_1)}.
$$
\item Case 4: $p$ even, $q_1$ even and $q_2$ odd. We are looking at $r,s$ such that $rq_1+sq_2=lp$. From Theorem \ref{thm:coefsign} we know that a term will be positive if and only if $s+\gcd(r,s,l)$ is odd. For odd $l$, $s$ must be even, so all the terms are positive. For even $l$, $s$ must still be even, but now the terms with even $r$ are negative. So we get half the even weight terms are negative. Using Lemmas \ref{lem:weightineq} and \ref{lem:evenodd} we then get
$$
N^{\text{even}}(\Gamma_{p;q_1,q_2})\sim \sum_{l=1}^{ \frac{q_1}{2} }\frac{2lp}{q_1q_2} + \sum_{l= \frac{q_1}{2}+1 }^{ \frac{q_2-1}{2} }\frac{(q_2-2l)p}{q_2(q_2-q_1)} = \frac{p (q_2(q_2-q_1)+1)}{4 q_2( q_2-q_1)}.
$$
Since all odd weight terms are positive and even weight terms alternate in sign, we have $\frac{N^{\text{+}}}{N}=1-\frac{N^{\text{even}}}{2N}$. Thus the asymptotic positivity ratio is given by $(1-\frac{N^{\text{even}}}{2N})\sim (1-\frac{N^{\text{even}}}{p})$, and hence this gives $$\lim_{p \to \infty} \frac{N^+(\Gamma_{p;q_1,q_2})}{N(\Gamma_{p;q_1,q_2})}=1-\frac{q_2(q_2-q_1)+1}{4 q_2( q_2-q_1)}=\frac{3q_2(q_2-q_1)-1}{4q_2(q_2-q_1)}.
$$

\end{itemize}
For the proof of Theorem \ref{thm:sphere}, we can proceed similarly, but now with only 3 cases.
\begin{itemize}
\item Case 1: $q_1$ odd and $q_2$ even. Now we can proceed directly from Theorem \ref{thm:coefsign}. Recall that $rq_1+sq_2=lp$ and a term is positive if and only if $\gcd(r,s,l)$ is odd. For odd weight $l$, all the terms are positive. For even weight $l$, the terms with odd $s$ are positive and even $s$ are negative. So we end up with half the even weight terms as negative. Using Lemmas \ref{lem:weightineq} and \ref{lem:evenodd} we then get

$$
N^{\text{even}}(\Gamma_{p;q_1,q_2})\sim \sum_{l=1}^{ \frac{q_1-1}{2} }\frac{2lp}{q_1q_2} + \sum_{l= \frac{q_1+1}{2}
}^{ \frac{q_2}{2} }\frac{(q_2-2l)p}{q_2(q_2-q_1)} = \frac{p (q_1(q_1-q_2)+1)}{4 q_1( q_1-q_2)}.
$$
Since all odd weight terms are positive and even weight terms alternate in sign, we have $\frac{N^{\text{+}}}{N}=1-\frac{N^{\text{even}}}{2N}$. Since the asymptotic positivity ratio is given by $(1-\frac{N^{\text{even}}}{2N})\sim (1-\frac{N^{\text{even}}}{p})$, this gives $$\lim_{p \to \infty} \frac{N^+(\Gamma_{p;q_1,q_2})}{N(\Gamma_{p;q_1,q_2})}=1-\frac{ (q_1(q_1-q_2)+1)}{4 q_1( q_1-q_2)}=\frac{3q_1(q_2-q_1)+1}{4q_1(q_2-q_1)}.
$$
\item Case 2: $q_1$ odd and $q_2$ odd. This case proceeds analogous to case 1, where half the even weight terms are negative. Using Lemmas \ref{lem:weightineq} and \ref{lem:evenodd}, we then get
$$
N^{\text{even}}(\Gamma_{p;q_1,q_2}) \sim \sum_{l=1}^{ \frac{q_1-1}{2} }\frac{2lp}{q_1q_2} + \sum_{l= \frac{q_1+1}{2}}^{ \frac{q_2-1}{2} }\frac{(q_2-2l)p}{q_2(q_2-q_1)} = \frac{p(q_1 q_2-1)}{4q_1 q_2}.
$$
Since all odd weight terms are positive and even weight terms alternate in sign, we have $\frac{N^{\text{+}}}{N}=1-\frac{N^{\text{even}}}{2N}$. Since the asymptotic positivity ratio is again given by $(1-\frac{N^{\text{even}}}{2N})\sim (1-\frac{N^{\text{even}}}{p})$, this gives $$\lim_{p \to \infty} \frac{N^+(\Gamma_{p;q_1,q_2})}{N(\Gamma_{p;q_1,q_2})}=1-\frac{q_1 q_2-1}{4q_1 q_2}=\frac{3q_1 q_2+1}{4q_1 q_2}.
$$
\item Case 3: $q_1$ even and $q_2$ odd. This case proceeds analogous to cases 1 and 2, where half the even weight terms are negative. Using Lemmas \ref{lem:weightineq} and \ref{lem:evenodd} we  then get
$$
N^{\text{even}}(\Gamma_{p;q_1,q_2})\sim \sum_{l=1}^{ \frac{q_1}{2} }\frac{2lp}{q_1q_2} + \sum_{l= \frac{q_1}{2}+1 }^{ \frac{q_2-1}{2} }\frac{(q_2-2l)p}{q_2(q_2-q_1)} = \frac{p(q_2(q_1-q_2)-1)}{4q_2(q_1- q_2)}.
$$
Since all odd weight terms are positive and even weight terms alternate in sign, we have $\frac{N^{\text{+}}}{N}=1-\frac{N^{\text{even}}}{2N}$. Since the asymptotic positivity ratio is again given by $(1-\frac{N^{\text{even}}}{2N})\sim (1-\frac{N^{\text{even}}}{p})$, this gives $$\lim_{p \to \infty} \frac{N^+(\Gamma_{p;q_1,q_2})}{N(\Gamma_{p;q_1,q_2})}=1-\frac{q_2(q_1-q_2)-1}{4q_2(q_1- q_2)}=\frac{3q_2(q_2-q_1)-1}{4q_2(q_2-q_1)}.$$
\end{itemize}

\end{proof}

\end{document}